\documentclass[smallextended]{svjour3}
\usepackage{amssymb,amsmath,hyperref,verbatim,cite,graphicx}
\makeatletter \let\cl@chapter\relax \makeatother
\usepackage{cleveref, xurl}
\usepackage{algorithmic}
\usepackage{multirow, longtable}
\usepackage{booktabs}
\usepackage{diagbox}
\usepackage{subfigure}
\usepackage{bbm}
\usepackage{pdflscape}
\usepackage{mathtools}

\usepackage{longtable}
\usepackage{enumerate}
\usepackage[dvipsnames]{xcolor}
\usepackage{tikz}
\usepackage{pgfplots}
\pgfplotsset{width=10cm,compat=1.9}
\usepackage[ruled,vlined,linesnumbered]{algorithm2e}
\usepackage{hyperref}
\usepackage{subcaption}
\hypersetup{
    colorlinks=true,
    linkcolor=blue,
    filecolor=magenta,      
    urlcolor=cyan,
}
\newtheorem{assumption}{Assumption}

\begin{document}
\newcommand{\red}{\color{red}}
\newcommand{\blue}{\color{blue}}
\newcommand{\green}{\color{green}}
\newcommand{\conv}{conv}
\newcommand{\rank}{rank}
\allowdisplaybreaks

\title{Tensor Completion via Integer Optimization}


\author{Xin Chen\textsuperscript{1} \and Sukanya Kudva\textsuperscript{1} \and Yongzheng Dai \and Anil Aswani \and Chen Chen}

\footnotetext[1]{Equal contribution}
\institute{Xin Chen \at 
            \email{xin.chen2024@berkeley.edu}\\
            IEOR, University of California Berkeley, Berkeley, CA, USA
            \and
            Sukanya Kudva \at 
            \email{sukanya\_kudva@berkeley.edu}\\
            IEOR, University of California Berkeley, Berkeley, CA, USA
            \and
            Yongzheng Dai \at
			\email{dai.651@osu.edu}\\
			ISE, The Ohio State University, Columbus, OH, USA
			\and
            Anil Aswani \at 
            \email{aaswani@berkeley.edu}\\
            IEOR, University of California Berkeley, Berkeley, CA, USA
            \and
			Chen Chen \at
            \email{chen.8018@osu.edu}\\
            ISE, The Ohio State University, Columbus, OH, USA
}

\date{Received: date / Accepted: date}

\maketitle

\begin{abstract}

The main challenge with the tensor completion problem is a fundamental tension between computational effort and the information-theoretic sample complexity rate. Past approaches either achieve the information-theoretic rate but lack practical algorithms to compute the corresponding solution, or have polynomial-time algorithms that require an exponentially-larger number of samples for low estimation error. This paper develops a novel tensor completion algorithm that resolves this tension by achieving both provable convergence (in numerical tolerance) in a linear number of oracle steps and the information-theoretic rate. Our approach formulates tensor completion as a convex optimization problem constrained using a gauge-based tensor norm, which is defined in a way that allows the use of integer linear optimization to solve linear separation problems over the unit-ball in this new norm. Adaptations based on this insight are incorporated into a Frank-Wolfe variant to build our algorithm. We show our algorithm scales-well  using numerical experiments on tensors with up to ten million entries.
\keywords{Tensor Completion \and Gauge Norm \and Convex Optimization \and Franke-Wolfe}
\subclass{15-06 \and 90C10}
\end{abstract}

\section{Introduction}
A tensor is a multilinear operator, and it can be represented as an array of numbers referenced by multiple indices. For a tensor $\psi \in \mathbb{R}^{r_1 \times \cdots \times r_p}$ of order $p$, we use the notation $\psi_x:= \psi_{x_1,\ldots,x_p}$ to refer to an entry corresponding to the indices $x = (x_1,\ldots,x_p)$, where $x_i \in [r_i]$ and $[s]:= \{1,\ldots,s\}$. Furthermore, we define some parameters that depend upon the tensor dimensions and order: $\rho = \sum_i r_i$, $\pi = \prod_{i} r_{i}$, and $\mathcal{R} = [r_1]\times\cdots\times[r_p]$. Though vectors and matrices are special cases of tensors, tensors with three or more indices pose unique challenges. Many tensor problems are \textsc{np}-hard \cite{hillar2013}, such as computing: rank, singular values, and nuclear norm. 

In this paper, we address one such difficult problem called \emph{tensor completion}. Here, a small subset of tensor entries are observed -- possibly with noise. Under an assumption of low-rankness, the problem is to fill-in the remaining, unobserved entries -- and remove noise, if any. Since modern datasets are often multidimensional, there are many applications of tensor completion \cite{song2019tensor}, including: recommendation systems \cite{Ge_Caverlee_Lu_2016, Karatzoglou_Amatriain_Baltrunas_Oliver_2010}, information diffusion \cite{Zafarani_Abbasi_Liu_2014}, regression \cite{aswani2016}, computer vision \cite{Duarte_Baraniuk_2012, Plas_De_Moor_Suykens_2011, Tan_Cheng_Feng_Feng_Wang_Zhang_2013}, and bioinformatics \cite{Bazerque_Mateos_Giannakis_2013, Acar_Dunlavy_Kolda_Mørup_2011}. 

\subsection{Related Work}
 The information-theoretic rate for estimation error is $\sqrt{k\cdot\sum_{i} r_{i}/n}$ for a tensor completion problem, where: $k$ is tensor rank, $r_i$ is the $i$-th dimension of the tensor, and $n$ is the number of samples \cite{gandy2011tensor}. Previous works have tried to characterize the tradeoff between complexity and the information-theoretic rate. For example, the approach in \cite{barak2016noisy} uses the Rademacher complexity of a sum-of-squares hierarchy to suggest a suitable norm for tensor completion, and observe a gap between what can be achieved information theoretically and what is attained by their computationally efficient method.

Initial work on tensor completion used decomposition methods based on \textsc{CP} and Tucker decomposition \cite{Tomasi_Bro_2005}. Other approaches accounted for robustness to outliers and data corruptions \cite{Javed_Bouwmans_Jung_2015, Jain_Gutierrez_Haupt_2017}, or imposed fixed-rank constraints \cite{kressner2014low}.  Alternative approaches used various tensor norms as a convex surrogate for tensor rank \cite{montanari2018spectral, yuan2016tensor,barak2016noisy}. Our approach falls into this last category, but we define a tensor norm using a gauge function as a convex surrogate. 
 
Past approaches either traded information-theoretic rate for a computationally efficient algorithm \cite{montanari2018spectral,barak2016noisy}; likewise, heuristics have been developed to compute non-certifiably-optimal solutions to such \textsc{np}-hard problems whose optimal solutions (if found) in principal achieve the information-theoretic rate \cite{kressner2014low,yuan2016tensor, Jain_Gutierrez_Haupt_2017}. In contrast, our algorithm is globally convergent and attains the information-theoretic rate (hence data efficiency), while certifying optimal solutions on instances on general tensors of sizes up to $10^{\times 7}$ within minutes. 

Certain algorithms for special cases of tensor completion have achieved the information-theoretic rate through practical computation. Aswani \cite{aswani2016} studied tensor completion for rank-1 non-negative tensors, formulating it as conic optimization with exponential constraints. Symmetric tensor completion was also studied, where Rao et al \cite{rao2015} used a variant of the Frank-Wolfe algorithm and Cai et al \cite{cai2019nonconvex} used two-stage non-convex optimization.
Another algorithm based on first-order optimization, and that uses integer optimization for a weak separation oracle was proposed in Bugg et al \cite{bugg2022nonnegative} for nonnegative tensors. Accelerated versions of this integer-optimization-based algorithm for nonnegative tensors were explored in Pan et al \cite{pan2023}.

\subsection{Contribution}
We design an algorithm for tensor completion of general tensors, which converges to a global optimum with a linear number of oracle calls and satisfies the information-theoretic rate. In past literature, tensor completion algorithms that perform both information-theoretically and computationally-well have been achieved only in special cases (some examples include nonnegative tensors, symmetric and orthogonal tensors). 

Our algorithm is a step in this direction for the case of general tensors. Building upon applications of various special-case tensor completion algorithms \cite{li2008image, aswani2016, liu2012tensor,zhang2019robust}, our algorithm for tensor completion opens up new applications that require positive \emph{and} negative entries. Examples of applications of tensor completion include social computing \cite{song2019tensor}, the moment method on multivariate distribution \cite{montanari2018spectral}, and healthcare applications \cite{gandy2011tensor, Bazerque_Mateos_Giannakis_2013}.

The main idea behind our algorithm is to define the tensor completion problem using a gauge norm, and then use a Frank-Wolfe-like first-order optimization algorithm to solve the newly defined convex optimization formulation. We define the gauge norm by constructing a convex polytope with its vertices as rank-1 tensors. The rank-1 tensor vertices help define the convex polytope using integer linear constraints. We relate the gauge norm to tensor rank, and analyze the norm's computational and statistical complexity to \textsc{np}-hard and low Rademacher complexity respectively.

Consequently, the tensor completion problem using gauge norm is also \textsc{np}-hard to solve to arbitrary accuracy. Nevertheless, since the formulation is a convex optimization, we design an algorithm using Blended Conditional Gradients (BCG) \cite{braun2019blended}. We construct a weak separation oracle for BCG using an integer linear optimization formulation, for which only a linear number of calls are needed. Integer optimization has a collection of mature, commercial-grade solvers that can navigate complex trade-offs between compute and solution quality on NP-hard problems. Moreover, weak separation enables us to deploy an additional heuristic to practically accelerate the computation of oracle calls, as well as to apply early termination in the integer programming solver when a sufficiently good solution has been found. Our numerical experiments demonstrate that this algorithm achieves the information-theoretic rate and is efficient for tensors with as large as ten million entries.

Our paper is organized as follows. In Section \ref{sec2}, we go over basic notations and the tensor completion problem. Section \ref{sec3} defines the gauge norm and establishes its relationship with tensor rank. We further analyze the norm complexity in Section \ref{sec4}. Then in Section \ref{sec5}, we discuss the complexity of the tensor completion problem with gauge norm, and design an algorithm. Finally, Section \ref{sec6} summarizes our numerical experiments, followed by the conclusion in Section \ref{sec7}.

\subsection{Going from Nonnegative to General Tensors}

Since the approach in Bugg et al \cite{bugg2022nonnegative} is closely related to this paper, here we provide a brief discussion about why going from nonnegative to general tensors requires the design and analysis of a new tensor completion algorithm:

The key idea behind Bugg et al \cite{bugg2022nonnegative} was to define a gauge norm using a $0$-$1$ polytope that represents the convex hull of all rank-1 nonnegative tensors whose maximum entry is 1. The polytope was designed in such a way that linear separation problems over the polytope could be written using integer linear constraints. The approach can be illustrated by an example: Consider the set $\{\zeta = \prod_{k=1}^p \nu_k : \nu_k\in\{0,1\}\}$ defined using a product, its linearization is given by $\{\zeta : 0\leq\zeta \leq \nu_k, \sum_k \nu_k + (1-p) \leq \zeta, \nu_k\in\{0,1\}\}$. A direct linearization is possible in this case because of the properties that: $\zeta = 0$ if any single $\nu_k = 0$, and $\zeta = 1$ if all $\nu_k = 1$.

The natural generalization of the idea behind Bugg et al \cite{bugg2022nonnegative}, which we pursue in this paper, is to design a gauge norm using a polytope whose vertices have $\pm 1$ entries  and that represents the convex hull of all (general) rank-1 tensors whose maximum/minimum entry is $\pm 1$. However, the above property used for linearization no longer holds when considering the new polytope. This can be shown via an example: For the set $\{\zeta = \prod_{k=1}^p \nu_k : \nu_k\in\{-1,+1\}\}$, a direct linearization is not possible because $\zeta = +1$ if an even number of $\nu_k = -1$, and $\zeta = -1$ if an odd number of $\nu_k = -1$. The generalization is fundamentally different because it involves the parity (i.e., odd or even) of the underlying items being multiplied. This is much more challenging and requires new computational design and theoretical analysis. 


General tensors also have well-posedness issues that do not occur for nonnegative tensors. In particular, the best low-rank approximation problem is not well-posed for tensors \cite{desilva2008}; this is related to a well-known example that a sequence of rank-2 tensors can be constructed whose limit has rank-3. In contrast, the best low-rank approximation problem \emph{is} well-posed for nonnegative tensors \cite{qi2016}. Given these phenomena for general tensors, there is a technical challenge in determining whether a new gauge norm as defined above can act as a convex surrogate for the rank of tensors.



\section{Preliminaries} \label{sec2}

By definition, a rank-1 tensor can be written as the tensor product of vectors, that is $\psi = \bigotimes_{k=1}^p \theta^{(k)}$, where $\theta^{(k)} \in \mathbb{R}^{r_k}$. Equivalently, each tensor entry is $\psi_x = \prod_{k=1}^p\theta^{(k)}_{x_k}$, where $\theta^{(k)}_{x_k}$ is the $x_k$-th element of vector $\theta^{(k)}$ for any index value $x_k \in [r_k]$.  When obvious, we will use $\theta_{x_k}$ instead of $\theta^{(k)}_{x_k}$. 

Let $\mathcal{B}_{\lambda}$ be the set of rank-1 tensors such that each entry of the tensor has absolute values less than or equal to $\lambda\in\mathbb{R}_+$:
\begin{equation}
\label{eq:blam}
\mathcal{B}_\lambda = \{\psi : \psi_x=\textstyle\lambda\cdot\prod_{k=1}^p\theta_{x_k}, \hspace{.1cm}
 \theta_{x_k}\in[-1,1] \text{ for }  x\in\mathcal{R}\},
\end{equation}
Then, the rank of a tensor is defined to be the minimum number of rank-1 tensors required to represent it. Formally, 
\begin{equation*}
\textstyle\rank(\psi) = \min \{q\ |\ \psi = \sum_{k=1}^q \psi^k, \psi^k \in \mathcal{B}_\infty \text{ for } k\in[q]\}.
\end{equation*}
Using this definition of tensor rank, a \textsc{cp} decomposition of the tensor is given by $\psi = \textstyle \sum_{k=1}^{\rank(\psi)} \psi^k$.

The tensor completion problem we consider begins with $n$ observations of the tensor, which are denoted by the pairs $(x\langle i\rangle, y\langle i\rangle) \in \mathcal{R}\times\mathbb{R}$ for $i \in [n]$. Here, $y\langle i \rangle$ is the (possibly noisy) observation of the tensor entry $\psi_{x\langle i\rangle}$. We note that the $x\langle i\rangle$ are assumed to be independent and identically distributed in our model, which means that any given entry of the tensor may be observed multiple times within the $n$ observations. Our approach is to solve the tensor completion problem using a least squares formulation:
\begin{equation}
\label{eq:tencompl}
\begin{aligned}
\widehat{\psi} \in \arg\min_{\psi}\ & \textstyle\frac{1}{n}\sum_{i=1}^n \big(y\langle i\rangle - \psi_{x\langle i\rangle}\big)^2\\
\mathrm{s.t.}\ & \|\psi\|_\pm\leq\lambda
\end{aligned}
\end{equation}
where the constraint uses a new norm $\|\psi\|_\pm$ that we design below. We show that our norm does in fact act as a convex surrogate for tensor rank, analyze its complexity, and demonstrate that it leads to a practical global algorithm for the tensor completion problem.

\section{Gauge Norm for Tensors} \label{sec3}
We construct a norm for general tensors using a gauge function \cite{chandrasekaran2012convex, Jaggi_2013,  bugg2022nonnegative}. To start with, consider rank-1 tensors. Let $\mathcal{S}_{\lambda}$ be the set of rank-1 tensors such that each entry of the tensor has an absolute value of some $\lambda \in \mathbb{R}_+$:
\begin{equation} \label{eq:slam}
\mathcal{S}_\lambda = \{\psi : \psi_x=\textstyle\lambda\cdot\prod_{k=1}^p\theta_{x_k}, \hspace{.1cm} \theta_{x_k}\in\{-1,1\} \text{ for }  x\in\mathcal{R}\}.
\end{equation}

Note that the difference between the sets $\mathcal{S}_{\lambda}$ and $\mathcal{B}_{\lambda}$ is that their rank-1 tensors have entries in $\{-\lambda,\lambda\}$ and $[-\lambda,\lambda]$ respectively. Our first step in defining a norm is to relate the convex hull of these sets of rank-1 tensors.

\begin{proposition}
\label{prop:convequiv}
The convex hulls of these sets are the same, meaning we have $\mathcal{C}_\lambda := \conv(\mathcal{B}_\lambda) = \conv(\mathcal{S}_\lambda)$.
\end{proposition}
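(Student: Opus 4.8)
The plan is to establish the two inclusions separately. The inclusion $\conv(\mathcal{S}_\lambda) \subseteq \conv(\mathcal{B}_\lambda)$ is immediate: since $\{-1,1\} \subseteq [-1,1]$, every rank-1 tensor appearing in $\mathcal{S}_\lambda$ also appears in $\mathcal{B}_\lambda$, so $\mathcal{S}_\lambda \subseteq \mathcal{B}_\lambda$, and taking convex hulls preserves this containment. The substance of the proposition is therefore the reverse inclusion, and for this it suffices to prove the stronger pointwise statement $\mathcal{B}_\lambda \subseteq \conv(\mathcal{S}_\lambda)$; taking convex hulls of both sides and using the idempotence of $\conv$ then yields $\conv(\mathcal{B}_\lambda) \subseteq \conv(\mathcal{S}_\lambda)$.

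To prove $\mathcal{B}_\lambda \subseteq \conv(\mathcal{S}_\lambda)$, I would fix an arbitrary $\psi \in \mathcal{B}_\lambda$ with factorization $\psi = \lambda \bigotimes_{k=1}^p \theta^{(k)}$, where each factor $\theta^{(k)} \in [-1,1]^{r_k}$. The key structural fact I would exploit is that the hypercube $[-1,1]^{r_k}$ is precisely the convex hull of its vertices $\{-1,1\}^{r_k}$, so each factor admits a finite representation $\theta^{(k)} = \sum_j \alpha_{k,j}\, v_{k,j}$ with cube vertices $v_{k,j} \in \{-1,1\}^{r_k}$, weights $\alpha_{k,j} \geq 0$, and $\sum_j \alpha_{k,j} = 1$.

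The crux of the argument---and the step that makes the statement nontrivial despite its short proof---is that the tensor product map $(\theta^{(1)},\ldots,\theta^{(p)}) \mapsto \bigotimes_{k=1}^p \theta^{(k)}$ is \emph{multilinear} rather than linear, so a convex decomposition in each separate factor does not obviously descend to a convex decomposition of the product. I would resolve this by expanding the product via distributivity:
\begin{equation*}
\bigotimes_{k=1}^p \theta^{(k)} = \bigotimes_{k=1}^p \Big( \sum_j \alpha_{k,j}\, v_{k,j} \Big) = \sum_{j_1,\ldots,j_p} \Big( \prod_{k=1}^p \alpha_{k,j_k} \Big) \bigotimes_{k=1}^p v_{k,j_k}.
\end{equation*}
Each tensor $\lambda \bigotimes_{k=1}^p v_{k,j_k}$ belongs to $\mathcal{S}_\lambda$, since its factor entries lie in $\{-1,1\}$, and the scalar coefficients $\prod_{k=1}^p \alpha_{k,j_k}$ are nonnegative and sum to $\prod_{k=1}^p \big( \sum_{j_k} \alpha_{k,j_k} \big) = 1$; that is, the product of the per-factor weight simplices maps exactly onto a single simplex of coefficients indexed by the product set $(j_1,\ldots,j_p)$. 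Hence $\psi$ is a genuine convex combination of elements of $\mathcal{S}_\lambda$, which establishes $\mathcal{B}_\lambda \subseteq \conv(\mathcal{S}_\lambda)$ and, combined with the easy direction, completes the proof.
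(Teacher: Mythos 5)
Your proof is correct, but it takes a genuinely different route from the paper's. The paper handles the easy inclusion $\conv(\mathcal{S}_\lambda)\subseteq\conv(\mathcal{B}_\lambda)$ exactly as you do, but proves the reverse inclusion non-constructively, by contradiction: if some $\psi'\in\mathcal{B}_\lambda$ lay outside $\conv(\mathcal{S}_\lambda)$, the hyperplane separation theorem would give a linear functional $\varphi$ and $\delta>0$ with $\langle\varphi,\psi'\rangle\geq\langle\varphi,\psi\rangle+\delta$ for all $\psi\in\conv(\mathcal{S}_\lambda)$; maximizing $\langle\varphi,\cdot\rangle$ over $\mathcal{B}_\lambda$ is a multilinear optimization problem over box constraints, and a cited result (Proposition 2.1 of Drenick) guarantees a global maximizer in $\mathcal{S}_\lambda$, contradicting separation. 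You instead prove the stronger pointwise statement $\mathcal{B}_\lambda\subseteq\conv(\mathcal{S}_\lambda)$ directly: decompose each factor $\theta^{(k)}\in[-1,1]^{r_k}$ over hypercube vertices and expand the tensor product by multilinearity, verifying that the product weights $\prod_{k}\alpha_{k,j_k}$ are nonnegative and sum to one --- which is exactly the point where multilinearity, rather than linearity, must be confronted, and you do so correctly. Both arguments are sound; yours is elementary and self-contained (the distributive expansion is in essence the mechanism behind the vertex-optimality result the paper outsources to the citation), and it is constructive, yielding explicit convex weights and, if you apply Carath\'eodory per factor, a decomposition into at most $\prod_{k}(r_k+1)$ elements of $\mathcal{S}_\lambda$. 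The paper's separation-based route is shorter given the external reference, mirrors the proof for the nonnegative case in the predecessor work it builds on, and foregrounds the fact --- reused later in the weak separation oracle --- that linear functionals over $\mathcal{C}_\lambda$ are optimized at points of $\mathcal{S}_\lambda$.
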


\begin{proof}
The proof for this proposition is similar to the proof of Proposition 2.1 of \cite{bugg2022nonnegative}, but where a multilinear optimization problem is formulated by restricting the entries to $[-1,1]$ instead of $[0,1]$ as in (\ref{eqn:mop}).

The proof follows by showing two set inclusions $\conv(\mathcal{S}_\lambda) \subseteq \conv(\mathcal{B}_\lambda)$ and $\conv(\mathcal{B}_\lambda) \subseteq \conv(\mathcal{S}_\lambda)$. The first inclusion is immediate by definition since we have $\mathcal{S}_\lambda \subset \mathcal{B}_\lambda$. The second inclusion can be proved by contradiction. Suppose instead that $\conv(B_\lambda) \not\subset \conv(\mathcal{S}_\lambda)$. Then there exists a tensor $\psi' \in B_\lambda$ with $\psi' \not\in \conv(\mathcal{S}_\lambda)$. By the hyperplane separation theorem, there exists $\varphi \in \mathbb{R}^{r_1\times\cdots\times r_p}$ and $\delta > 0$ such that $\langle \varphi, \psi' \rangle \geq \langle \varphi, \psi\rangle + \delta$ for all $\psi \in \conv(\mathcal{S}_\lambda)$, where $\langle\cdot,\cdot\rangle$ is the usual inner product that is defined as the summation of elementwise multiplication. Now consider the multilinear optimization problem
\begin{equation}
\label{eqn:mop}
\begin{aligned}
\max\ & \langle \varphi, \psi \rangle\\
\mathrm{s.t.}\ & \psi_x=\textstyle\lambda\cdot\prod_{k=1}^p\theta_{x_k}, &\text{ for } x \in \mathcal{R}\\
& \theta_{x_k}\in[-1,1], &\text{ for }  x\in\mathcal{R}
\end{aligned}
\end{equation}
Proposition 2.1 of \cite{drenick1992multilinear} shows there exists a global optimum $\psi''$ of (\ref{eqn:mop}) with $\psi''\in \mathcal{S}_\lambda$. By construction, we must have $\langle \varphi, \psi'' \rangle \geq \langle \varphi, \psi' \rangle$, which implies $\langle \varphi, \psi'' \rangle \geq \langle \varphi, \psi\rangle + \delta$ for all $\psi \in \conv(\mathcal{S}_\lambda)$. But this last statement is a contradiction since $\psi'' \in \mathcal{S}_\lambda \subseteq\conv(\mathcal{S}_\lambda)$.
\end{proof}

\begin{remark}
Note $\mathcal{B}_\lambda = \lambda\mathcal{B}_1$, $\mathcal{S}_\lambda = \lambda\mathcal{S}_1$, and $\mathcal{C}_\lambda = \lambda\mathcal{C}_1$.
\end{remark}

\subsection{Defining the Norm}

$\mathcal{C}_\lambda$ is useful because it is a convex polytope with its vertices as the points in $\mathcal{S}_\lambda$. Our next step is to use $\mathcal{C}_\lambda$ to define a function that we prove in the next proposition is a gauge norm. The proof reveals the non-intuitive property that $\mathcal{C}_1$ has a non-empty interior and hence can be magnified (using $\lambda\mathcal{C}_1$) to cover all tensors. This property ensures that the below function is in fact a norm defined for all tensors.
\begin{proposition}
\label{prop:well-posed}
The function defined as
\begin{equation}
\|\psi\|_\pm := \inf \{\lambda \geq 0\ |\ \psi \in \lambda\mathcal{C}_1\}
\end{equation}
is a norm for all tensors $\psi \in \mathbb{R}^{r_1\times\cdots\times r_p}$.
\end{proposition}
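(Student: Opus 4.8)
The plan is to verify the three defining properties of a norm — positive definiteness (including finiteness for all tensors), absolute homogeneity, and the triangle inequality — for the gauge function $\|\psi\|_\pm = \inf\{\lambda \geq 0 \mid \psi \in \lambda\mathcal{C}_1\}$. The general theory of gauge (Minkowski) functionals tells us that whenever $\mathcal{C}_1$ is a convex, bounded, symmetric set with the origin in its interior, its gauge is automatically a norm; so the real work is establishing that $\mathcal{C}_1 = \conv(\mathcal{S}_1)$ has exactly these four properties. Convexity and symmetry are essentially immediate, so the crux reduces to two points: boundedness (to get finiteness of the gauge away from $\psi = 0$ and definiteness) and, most delicately, that the origin lies in the \emph{interior} of $\mathcal{C}_1$ (to get finiteness of the gauge everywhere, i.e. that $\lambda\mathcal{C}_1$ eventually covers every tensor).

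First I would record convexity and symmetry. $\mathcal{C}_1$ is a convex hull, hence convex. For symmetry, note that $\mathcal{S}_1$ is closed under negation: if $\psi_x = \prod_k \theta_{x_k}$ with $\theta_{x_k}\in\{-1,1\}$, then negating a single factor vector $\theta^{(1)}$ yields $-\psi \in \mathcal{S}_1$; convex hulls preserve this, so $\mathcal{C}_1 = -\mathcal{C}_1$. This symmetry gives absolute homogeneity $\|c\psi\|_\pm = |c|\,\|\psi\|_\pm$ directly from the definition of the gauge, using $\mathcal{C}_\lambda = \lambda\mathcal{C}_1$ from the Remark. Next I would handle boundedness: every vertex in $\mathcal{S}_1$ has entries in $\{-1,1\}$, so $\mathcal{S}_1$ lies in the cube $[-1,1]^{\mathcal{R}}$, and therefore so does its convex hull $\mathcal{C}_1$; boundedness then forces $\|\psi\|_\pm > 0$ for $\psi \neq 0$ (a nonzero tensor cannot sit in $\lambda\mathcal{C}_1$ for arbitrarily small $\lambda$), which together with $\|0\|_\pm = 0$ gives positive definiteness, and also guarantees the gauge is finite and subadditive wherever it is defined.

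The main obstacle — and the point the excerpt itself flags as ``non-intuitive'' — is showing $0 \in \operatorname{int}(\mathcal{C}_1)$, equivalently that $\mathcal{C}_1$ is full-dimensional in $\mathbb{R}^{r_1\times\cdots\times r_p}$ (dimension $\pi$), so that scaling $\lambda\mathcal{C}_1$ can reach every tensor and $\|\psi\|_\pm < \infty$ always. This is nontrivial because $\mathcal{S}_1$ consists only of rank-1 tensors, a measure-zero algebraic variety, yet we must show their convex hull has nonempty interior. The plan is to exhibit $\pi$ affinely independent points of $\mathcal{S}_1$, or equivalently to show that no nonzero linear functional vanishes on all of $\mathcal{S}_1$: if $\langle \varphi, \psi\rangle = 0$ for every $\psi \in \mathcal{S}_1$, I would argue via the multilinear structure that $\varphi = 0$. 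Concretely, fixing the sign pattern of all factor vectors except one and summing the resulting $\pm 1$ combinations of $\langle\varphi,\cdot\rangle$ isolates partial sums of the entries of $\varphi$; by choosing sign patterns appropriately (a Hadamard/inclusion-exclusion style argument over the $\{-1,1\}^{r_k}$ choices) one extracts each individual entry $\varphi_x$ and forces it to zero. Since $\mathcal{C}_1 = \operatorname{conv}(\mathcal{S}_1)$ is symmetric and its affine hull is all of $\mathbb{R}^{r_1\times\cdots\times r_p}$, symmetry places $0$ in the relative interior, which coincides with the interior by full-dimensionality. With $0 \in \operatorname{int}(\mathcal{C}_1)$, boundedness, convexity, and symmetry in hand, the standard gauge-functional argument delivers finiteness, definiteness, homogeneity, and the triangle inequality, completing the proof that $\|\cdot\|_\pm$ is a norm on all tensors.
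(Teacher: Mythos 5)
Your proposal is correct, and its overall skeleton matches the paper's: both reduce the claim to verifying that $\mathcal{C}_1$ is convex, symmetric, bounded, and has $0$ in its interior, then invoke standard gauge (Minkowski) functional theory (the paper cites Example 3.50 of Rockafellar--Wets for this last step). The genuine difference is in how the crux --- full-dimensionality of $\mathcal{C}_1$ --- is established. The paper argues primally and constructively: it exhibits $\pi$ explicitly linearly independent rank-1 sign tensors $d^x = \bigotimes_{k=1}^p \beta^{x_k}$, where each $\beta^{x_k}$ is either the all-ones vector or a vector with a single $-1$, so that $\conv(\{0\} \cup \{d^x\}_{x\in\mathcal{R}})$ is a $\pi$-dimensional simplex inside $\mathcal{C}_1$ (using Rockafellar's characterization of dimension via inscribed simplices). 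You argue dually: no nonzero linear functional annihilates $\mathcal{S}_1$, which your Hadamard/inclusion-exclusion sketch does make rigorous, since for any target index $x^*$ one has $\sum_{\theta}\prod_k \theta^{(k)}_{x^*_k}\,\langle\varphi,\bigotimes_k\theta^{(k)}\rangle = 2^{\rho}\,\varphi_{x^*}$, using $\sum_{\theta^{(k)}\in\{-1,1\}^{r_k}}\theta^{(k)}_i\theta^{(k)}_j = 2^{r_k}\mathbbm{1}[i=j]$; hence $\mathcal{S}_1$ spans the whole space, and since $0\in\mathcal{C}_1$ by symmetry, the affine hull is everything and symmetry pushes $0$ into the interior. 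Both routes are valid: the paper's buys explicit simplex vertices and a short appeal to a textbook theorem, while yours avoids constructing a mode-by-mode basis, yields the quantitative extraction identity above, and generalizes immediately to any vertex set whose factor vectors span each $\mathbb{R}^{r_k}$. Two points worth tightening in a write-up: exhibiting ``$\pi$ affinely independent points of $\mathcal{S}_1$'' would only certify dimension $\pi-1$, so you need $\pi+1$ affinely independent points of $\mathcal{C}_1$ (the paper adjoins $0$ for exactly this reason --- your executed spanning argument together with $0\in\mathcal{C}_1$ does supply it); and your final step tacitly uses the standard fact that if $a\in\operatorname{int}(\mathcal{C}_1)$ and $-a\in\mathcal{C}_1$, then the midpoint $0$ lies in $\operatorname{int}(\mathcal{C}_1)$, which deserves an explicit sentence.
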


\begin{proof}
We use the result from Example 3.50 of \cite{rockafellar2009} to conclude that $\|\cdot\|_\pm$ is a norm. To apply the result, we check that the required conditions on $\mathcal{C}_1$ hold.

By definition $\mathcal{C}_1$ is convex, closed, and bounded. $\mathcal{C}_1$ is also symmetric since for every $a \in \mathcal{C}_1$, $-a \in \mathcal{C}_1$ is also true. To see this, let $a = \sum_i \lambda_i \psi_i$ where $\psi_i \in \mathcal{S}_1$, $\lambda_i \in [0,1]$ and $\sum_i \lambda_i= 1$ and notice $-\psi_i \in \mathcal{S}_1$. Symmetry and convexity also ensure $0 \in \mathcal{C}_1$ since for any $a \in \mathcal{C}_1$, $\frac{1}{2}\left(a + (-a)\right) = 0$. 

The final, non-trivial condition required is that $\mathcal{C}_1$ has a non-empty interior. To prove this, we use Theorem 2.4 of \cite{Rockafellar_2015} that the dimension of $\mathcal{C}_1$ is the maximum of dimensions of simplices included in it. We construct a simplex in $\mathcal{C}_1$ that has dimension $\pi = \Pi_{i} r_i$, and hence $\mathcal{C}_1$ has a dimension of at least $\pi$. But the flattened vector space of tensors also has dimension $\pi$; consequently, the dimension of $\mathcal{C}_1$ is at most $\pi$. Hence, $\mathcal{C}_1$ has a dimension $\pi$, which is the full dimension of the space, implying that the set $\mathcal{C}_1$ must have a non-empty interior. 

The rest of the proof constructs such a simplex of dimension $\pi$. Consider a polytope $D = \conv(0 \: \cup \: \{d^x\}_{x \in \mathcal{R}})$. Here, for any $x =(x_1,\cdots,x_p) \in \mathcal{R}$, consider $d^x = \bigotimes_{k=1}^p \beta^{x_k}$ with each vector $\beta^{x_k} \in \mathbb{R}^{r_k}$ defined as
\[\beta^{x_k} = \begin{cases}
  \mathbbm{1} & \text{if $x_k=1$} \\
  \mathbbm{f}_{x_k} & \text{if $x_k \neq 1$}
\end{cases},\]
where $\mathbbm{1}$ is a vector of one's and $\mathbbm{f}_j$ is a vector with $-1$ in position $j$ and one's elsewhere. One can verify that $\{\beta^{x_k}\}_{x_k \in [r_k]}$ are linearly independent vectors and make a complete basis for $\mathbb{R}^{r_k}$. Since the tensor product of linearly independent vectors gives linearly independent tensors, the tensors $\{d^x\}_{x \in \mathcal{R}}$ are all linearly independent. Consequently, $\{d^x - 0\}_{x \in \mathcal{R}}$ are linearly independent and $\{d^x\}_{x\in \mathcal{R}} \cup \: 0$ are affinely independent. By definition, the polytope $D$, which is a convex hull of $|\mathcal{R}| +1 = \pi +1$ affinely independent points, is a simplex of dimension $\pi$. Note that all the points are in $\mathcal{C}_1$, and hence so is the simplex.

With this, $\mathcal{C}_1$ is shown to satisfy all the required conditions, which means that the proposition holds.
\end{proof}

Towards our end goal of developing a practical algorithm for tensor completion using this gauge norm, our next result provides an alternative characterization of the vertices of $\mathcal{C}_\lambda$. This result is important because it shows that these vertices can be represented by linear inequality constraints using (binary) integer variables. This is unlike, say, the nuclear norm, which under the $\ell_2$ norm defines an inherently nonlinear (continuous) feasible region.

\begin{proposition}\label{prop:bilp} Consider the set defined as
\begin{equation*}
    \begin{aligned}
\widehat{\mathcal{S}}_\lambda = \big\{&\psi :\;  \psi_x = \lambda\cdot y_{x,1}   & x \in \mathcal{R}\\
& y_{x,k} \geq (-\theta_{x_k}-y_{x,k+1} -1) & k \in [p-1], x \in \mathcal{R}\\
& y_{x,k} \geq (\theta_{x_k}+y_{x,k+1} -1) & k \in [p-1], x \in \mathcal{R}\\
& y_{x,k} \leq  (\theta_{x_k}-y_{x,k+1} +1)& k \in [p-1], x \in \mathcal{R}\\
& y_{x,k} \leq  (-\theta_{x_k}+y_{x,k+1} +1) & k \in [p-1], x \in \mathcal{R}\\
&y_{x,p} = \theta_{x_p} & x \in \mathcal{R}\\
& \theta_{x_k} \in \{-1,1\} & x \in \mathcal{R}\\ 
& \theta^k \in \mathbb{R}^{r_k},\;  y_{x,k} \in \mathbb{R} & k\in[p], x \in \mathcal{R}&\big\}.
\end{aligned}
\end{equation*}
We have that $\widehat{\mathcal{S}}_\lambda = \mathcal{S}_\lambda$.

\end{proposition}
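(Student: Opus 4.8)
The plan is to prove the set equality $\widehat{\mathcal{S}}_\lambda = \mathcal{S}_\lambda$ by establishing both inclusions, where the whole argument rests on recognizing what the auxiliary variables $y_{x,k}$ are meant to encode: the partial products $y_{x,k} = \prod_{j=k}^p \theta_{x_j}$. Under this reading the constraint $y_{x,p} = \theta_{x_p}$ is a base case, the identity $\psi_x = \lambda y_{x,1}$ recovers the full product $\lambda\prod_{k=1}^p \theta_{x_k}$, and the four inequalities at each level $k \in [p-1]$ are designed to enforce the pairwise recursion $y_{x,k} = \theta_{x_k}\, y_{x,k+1}$. This is precisely how the parity obstruction flagged in the introduction gets circumvented: instead of linearizing the length-$p$ product in one shot, the product is accumulated one factor at a time, and each pairwise product of two values in $\{-1,1\}$ does admit an exact linear description.

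Accordingly, the key lemma I would isolate and prove first is the following. For $a,b \in \{-1,1\}$ and a \emph{real} variable $z$, the four inequalities $z \geq -a-b-1$, $z \geq a+b-1$, $z \leq a-b+1$, and $z \leq -a+b+1$ hold if and only if $z = ab$. I would verify this simply by enumerating the four sign patterns $(a,b) \in \{-1,1\}^2$: in each pattern two of the inequalities become tight and sandwich $z$ to the single value $ab$ (for instance, when $(a,b)=(1,1)$ the second inequality gives $z \geq 1$ while the third and fourth give $z \leq 1$), and the remaining two are slack. The essential point is that this pins a continuous $z$ to $\pm 1$ without any integrality constraint on $z$, \emph{provided} its two inputs already lie in $\{-1,1\}$.

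With the lemma in hand the two inclusions are short. For $\mathcal{S}_\lambda \subseteq \widehat{\mathcal{S}}_\lambda$, I would take any $\psi \in \mathcal{S}_\lambda$ with its defining $\theta_{x_k} \in \{-1,1\}$, set $y_{x,k} := \prod_{j=k}^p \theta_{x_j}$, and check feasibility directly: $y_{x,p}=\theta_{x_p}$ and $\psi_x = \lambda y_{x,1} = \lambda\prod_{k=1}^p\theta_{x_k}$ hold by construction, and since $y_{x,k} = \theta_{x_k}\,y_{x,k+1}$ with both factors in $\{-1,1\}$, the lemma shows every inequality is satisfied. For the reverse inclusion $\widehat{\mathcal{S}}_\lambda \subseteq \mathcal{S}_\lambda$, I would run a downward induction on $k$ from $p$ to $1$: the base case $y_{x,p}=\theta_{x_p}\in\{-1,1\}$ is given, and assuming $y_{x,k+1}\in\{-1,1\}$, the two inputs $\theta_{x_k}$ and $y_{x,k+1}$ of the level-$k$ inequalities both lie in $\{-1,1\}$, so the lemma forces $y_{x,k} = \theta_{x_k}\,y_{x,k+1}\in\{-1,1\}$. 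Unwinding the recursion gives $y_{x,1}=\prod_{k=1}^p\theta_{x_k}$, hence $\psi_x = \lambda\prod_{k=1}^p\theta_{x_k}$ with all $\theta_{x_k}\in\{-1,1\}$, which is exactly membership in $\mathcal{S}_\lambda$.

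The step I expect to demand the most care is the ``only if'' direction of the lemma together with its interaction with the continuity of the $y_{x,k}$. One must confirm that the four inequalities do not merely permit the value $ab$ but genuinely exclude every other real number, and that this pinning propagates through the induction. This is exactly why the induction must proceed \emph{downward}, anchored at the integer-valued $y_{x,p}=\theta_{x_p}$, so that at each level the second input to the lemma is already certified to lie in $\{-1,1\}$; were the $y_{x,k}$ allowed to be arbitrary reals, the four inequalities alone would not pin them down. Everything else reduces to direct substitution.
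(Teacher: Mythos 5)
Your proof is correct and follows essentially the same route as the paper's: the paper likewise introduces the partial products $y_{x,k}=\prod_{j=k}^{p}\theta_{x_j}$ with the recursion $y_{x,k}=\theta_{x_k}\,y_{x,k+1}$ anchored at $y_{x,p}=\theta_{x_p}$, and treats the four inequalities as the standard linearization of this negated-XOR relation, citing Section 2.5 of Conforti--Cornu\'ejols--Zambelli rather than verifying it. Your only departure is one of completeness, not of method: where the paper delegates the pairwise-product linearization to the textbook reference and leaves the downward induction implicit, you prove the four-inequality lemma by enumerating the sign patterns $(a,b)\in\{-1,1\}^2$ and propagate the $\pm 1$-valuedness of the continuous variables $y_{x,k}$ explicitly from $k=p$ down to $k=1$, which makes the reverse inclusion $\widehat{\mathcal{S}}_\lambda\subseteq\mathcal{S}_\lambda$ fully rigorous.
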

\begin{proof}
    We show that the constraints defining the sets $\mathcal{S}_\lambda$ and $\widehat{\mathcal{S}}_\lambda$ are equivalent. Consider some $x \in \mathcal{R}$. From the definition of $\mathcal{S}_\lambda$ in (\ref{eq:slam}), we have $\psi_x = \lambda \prod_{i=1}^p \theta_{x_k}$. Define $y_{x,k} = \prod_{i=k}^{p} \theta_{x_k}$, for $k \in [p]$ so that $\psi_x = \lambda y_{x,1}$. Or equivalently, in a recursive relationship, $y_{x,k} = \theta_{x_k} y_{x,k+1}$ for $k \in [p-1]$ and $y_{x,p} = \theta_{x_p}$. The recursive constraints can be thought of as a negated-XOR relation, and linearized by transformations for conjunctive and disjunctive statements (see Section 2.5 of \cite{Conforti_Cornuéjols_Zambelli_2014}). These linearized constraints correspond to constraints 2-5 in the definition of $\widehat{\mathcal{S}}_\lambda$ as given above. This shows that for each $x \in \mathcal{R}$, the tensor $\psi_x$ is defined the same in both $\mathcal{S}_\lambda$ and $\widehat{\mathcal{S}}_\lambda$.
\end{proof} 

\subsection{Relation between Gauge Norm and Tensor Rank}
We next establish a relationship between tensor rank and our gauge norm, and we use this to argue that our norm is a meaningful constraint in the tensor completion problem. The underlying issue is related to the fact that the best low-rank approximation problem is not well-posed for tensors \cite{desilva2008}, which is in sharp contrast to the case of nonnegative tensors for which the best low-rank approximation problem is well-posed \cite{qi2016}.

For the results in this subsection, we impose a regularity condition to eliminate such pathological behavior of tensors. 

\begin{assumption}[Regularity Condition]
    \label{regcondn}
\textit{ Consider a class of tensors defined by the set}
\begin{multline}
    \label{RC}
\Gamma = \big\{\psi : \exists \text{ \textsc{cp} decomposition of } \psi \text{ with terms } \psi^k \text{ s.t. } \\
\|\psi^k\|_{\max} \leq \|\psi\|_{\max} \text{ for } k \in [\mathrm{rank}(\psi)]\big\}.
\end{multline}
\textit{This class is such that each tensor $\psi$ has its largest entry at least as large as the largest entry of each \textsc{cp} term $\psi^k$.}
\end{assumption}

\begin{remark}
    A \textsc{cp} decomposition always exists for a finite-valued tensor, but it may not be unique. The class defined above asks that the regularity condition holds for at least one \textsc{cp} decomposition, but does not make any statement about holding for all the possible \textsc{cp} decompositions.  
\end{remark}

The following proposition suggests that the norm $\|\psi\|_\pm$, which is convex, can be a useful alternative to tensor rank. 
 
\begin{proposition}
\label{prop:ranks}
For any $\psi \in \Gamma$ that satisfies Assumption \ref{regcondn}, we have $\|\psi\|_{\max} \leq \|\psi\|_\pm \leq \rank(\psi)\cdot\|\psi\|_{\max}$.
\end{proposition}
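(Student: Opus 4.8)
The plan is to establish the two inequalities separately, since they have quite different characters: the lower bound is a direct consequence of the shape of $\mathcal{C}_1$, whereas the upper bound is exactly where Assumption \ref{regcondn} earns its keep.

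For the lower bound $\|\psi\|_{\max} \leq \|\psi\|_\pm$, I would first note that every point of $\mathcal{S}_1$ has entries in $\{-1,+1\}$, so any convex combination has entries in $[-1,1]$; thus $\mathcal{C}_1$ is contained in the unit ball of $\|\cdot\|_{\max}$. It then follows immediately that $\psi \in \lambda\mathcal{C}_1$ forces $\|\psi\|_{\max}\leq\lambda$, and taking the infimum over feasible $\lambda$ in the definition of $\|\cdot\|_\pm$ gives the claim.

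For the upper bound, write $m = \|\psi\|_{\max}$ and $q = \rank(\psi)$. Because $\psi \in \Gamma$, Assumption \ref{regcondn} hands me a \textsc{cp} decomposition $\psi = \sum_{k=1}^q \psi^k$ into rank-1 terms satisfying $\|\psi^k\|_{\max}\leq m$. The crux is a containment step: each rank-1 term $\psi^k$, say $\psi^k = \bigotimes_{l=1}^p a^{(l)}$, lies in $\mathcal{B}_m = m\mathcal{B}_1$. To see this I would set $c_l = \|a^{(l)}\|_\infty$, rescale each factor to land in $[-1,1]$, and absorb the leftover scalar $\prod_l c_l / m$ into a single factor; this is legitimate precisely because $\prod_l c_l = \|\psi^k\|_{\max}\leq m$ makes that scalar at most one. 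Hence $\psi^k/m \in \mathcal{B}_1 \subseteq \mathcal{C}_1$, and then $\psi/(qm) = \tfrac{1}{q}\sum_{k=1}^q(\psi^k/m)$ is an equally-weighted convex combination of $q$ points of $\mathcal{C}_1$. Convexity of $\mathcal{C}_1$ gives $\psi/(qm)\in\mathcal{C}_1$, so $\psi\in qm\,\mathcal{C}_1$ and $\|\psi\|_\pm \leq qm = \rank(\psi)\cdot\|\psi\|_{\max}$.

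The hard part will be the rank-1 containment step, and specifically pinning down where regularity is indispensable. The scalar $\prod_l c_l/m$ can only be absorbed without driving a factor outside $[-1,1]$ when $\prod_l c_l \leq m$, which is exactly the hypothesis $\|\psi^k\|_{\max}\leq\|\psi\|_{\max}$ guaranteed by membership in $\Gamma$. Dropping Assumption \ref{regcondn} would allow a rank-1 term whose max entry vastly exceeds that of $\psi$ (via cancellation among terms), so that no fixed multiple of $\mathcal{C}_1$ contains it and the $\rank(\psi)$ bound collapses --- the very ill-posedness of best low-rank approximation for general tensors noted earlier.
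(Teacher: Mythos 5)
Your proof is correct and takes essentially the same route as the paper's: the lower bound via the containment $\mathcal{C}_1 \subseteq \{\varphi : \|\varphi\|_{\max}\leq 1\}$ (the paper phrases this through convexity of the max norm), and the upper bound via the \textsc{cp} decomposition guaranteed by Assumption \ref{regcondn}, where your equally-weighted convex combination $\psi/(qm)=\tfrac{1}{q}\sum_k(\psi^k/m)$ is just the triangle inequality for the gauge in disguise. The one point of added value is that your explicit rescaling argument actually proves the rank-1 fact $\|\psi^k\|_\pm \leq \|\psi^k\|_{\max}$ that the paper merely asserts (though you should note the trivial degenerate cases $\|\psi\|_{\max}=0$ or a zero factor $c_l=0$, where the divisions break down but the claim holds immediately).
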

\begin{proof}
The right-side inequality requires the regularity assumption. Using the \textsc{cp} decomposition and the triangle inequality for norms,
\begin{equation*}
\label{eqn:propeqnproof}
\begin{split}
\|\psi\|_\pm\leq \textstyle\sum_{k=1}^{\rank(\psi)}\|\psi^k\|_\pm = \textstyle\sum_{k=1}^{\rank(\psi)}\|\psi^k\|_{\max}, \\
\text{ where $\psi^k\in\mathcal{B}_\infty$}
\end{split}
\end{equation*}
The last equality follows from $\|\psi^k\|_{\pm} = \|\psi^k\|_{\max}$ when $\psi^k\in\mathcal{B}_\infty$. Using
$\|\psi^k\|_{\max} \leq \|\psi\|_{\max}$ from Assumption \ref{regcondn} gives the right-side inequality.

The proof of the left-side inequality is similar to Proposition 2.4 of \cite{bugg2022nonnegative}. For any $\lambda \geq 0$, if $\psi \in\mathcal{S}_{\lambda}$, then by definition $\|\psi\|_{\max} = \lambda$. By the convexity of norms we have that: if $\psi \in \mathcal{C}_{\lambda}$, then $\|\psi\|_{\max} \leq \lambda$. This means that $\forall \: \lambda \geq 0$ we have $\mathcal{C}_{\lambda} \subseteq \mathcal{U}_{\lambda} := \{\psi : \|\psi\|_{\max} \leq \lambda\}$, and thus $\inf \{\lambda \ |\ \psi \in \lambda\mathcal{U}_1\} \leq \inf \{\lambda \ |\ \psi \in \lambda\mathcal{C}_1\}$. But this is equivalent to $\|\psi\|_{\max} \leq \|\psi\|_{\pm}$, which proves the left-side inequality.
\end{proof}

\section{Complexity Analysis of Norm} \label{sec4}
We show that calculating the norm $\|\ \cdot\|_\pm$ is \textsc{np}-hard. Despite this, it is still useful for tensor completion because it is defined using a convex polytope $\mathcal{C}_1$ whose vertices admit a convenient representation, as described in Proposition \ref{prop:bilp}, that we will use to develop a practical algorithm. 
\begin{proposition}[\textbf{Computational complexity}]
\label{prop:nphard}
The norm $\|\cdot\|_\pm$ is \textsc{np}-hard to approximate to arbitrary accuracy.
\end{proposition}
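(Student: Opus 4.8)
The plan is to prove hardness by passing to the dual (polar) norm and then invoking the polynomial-time equivalence between separation and optimization. First I would write down the dual norm explicitly. Since $\mathcal{C}_1 = \conv(\mathcal{S}_1)$ by Proposition \ref{prop:convequiv} and the support function of a polytope is attained at a vertex, the dual norm is
\[
\|\varphi\|_\pm^{*} \;=\; \max_{\psi\in\mathcal{C}_1}\langle\varphi,\psi\rangle \;=\; \max_{\psi\in\mathcal{S}_1}\langle\varphi,\psi\rangle \;=\; \max_{\theta^{(1)},\dots,\theta^{(p)}\in\{-1,1\}}\ \sum_{x\in\mathcal{R}}\varphi_x\prod_{k=1}^{p}\theta^{(k)}_{x_k},
\]
i.e.\ a multilinear maximization over the sign hypercube. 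The point of this step is that computing $\|\varphi\|_\pm^{*}$ is \emph{exactly} linear optimization over $\mathcal{C}_1$, a quantity with clean combinatorial meaning.

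Next I would argue that this dual quantity is already \textsc{np}-hard for order $p=2$ (matrices), where it specializes to $\max_{s,t\in\{-1,1\}} s^{\top}\varphi\, t$, the $\ell_\infty\!\to\!\ell_1$ (cut-type) norm of the matrix $\varphi$. I would reduce from \textsc{max-cut}, building $\varphi$ from the graph so that an optimal sign assignment encodes a maximum cut; this is the standard route establishing \textsc{np}-hardness of the cut norm, and the broader tensor analogue is consistent with the hardness results for tensor norms in \cite{hillar2013}. For the ``arbitrary accuracy'' clause I would use a rounding argument: on integer-weighted instances the optimum is an integer, so any algorithm returning an additive error below $1/2$ recovers the exact value; hence no poly-time routine can approximate $\|\cdot\|_\pm^{*}$ to arbitrary accuracy unless $\mathrm{P}=\mathrm{NP}$.

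Finally I would transfer the hardness to the primal norm. A routine that computes (or arbitrarily-accurately approximates) $\|\cdot\|_\pm$ immediately yields a weak membership oracle for $\mathcal{C}_1$, since $\psi\in\mathcal{C}_1 \iff \|\psi\|_\pm\le 1$. The body $\mathcal{C}_1$ is well-bounded: its vertices lie in $\{-1,1\}^{\mathcal{R}}$ so $\mathcal{C}_1\subseteq[-1,1]^{\mathcal{R}}$ (circumradius $\sqrt{\pi}$), while Proposition \ref{prop:well-posed} guarantees a nonempty interior, and the explicit simplex $D$ constructed there certifies an inscribed Euclidean ball whose radius $r$ satisfies $\log(1/r) = \mathrm{poly}(\pi)$, hence polynomial in the input size (the tensor has $\pi$ entries). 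By the classical Gr\"otschel--Lov\'asz--Schrijver equivalence of weak separation and weak optimization for such well-bounded bodies, a poly-time arbitrary-accuracy oracle for $\|\cdot\|_\pm$ would give poly-time weak optimization over $\mathcal{C}_1$, i.e.\ arbitrary-accuracy computation of $\|\cdot\|_\pm^{*}$, contradicting the previous paragraph. The \textbf{main obstacle} is exactly this transfer step: making the separation--optimization equivalence rigorous here requires controlling the approximation tolerances and bit-complexities through the ellipsoid machinery and verifying the quantitative well-boundedness constants, whereas the dual-norm reduction itself is comparatively direct.
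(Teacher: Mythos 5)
Your proposal is correct and follows essentially the same route as the paper: pass to the dual norm $\sup\{\langle\varphi,\psi\rangle : \psi\in\mathcal{S}_1\}$, show that at $p=2$ it specializes to the $\ell_\infty\!\to\!\ell_1$ (cut-type) matrix norm whose arbitrary-accuracy approximation is \textsc{np}-hard (your \textsc{max-cut}/integrality argument via the graph Laplacian is the same hardness the paper imports from Rohn \cite{Rohn_2000}, since Laplacians are exactly the symmetric matrices with nonpositive off-diagonal entries covered by his Theorem 5, with Hendrickx--Olshevsky \cite{Hendrickx_Olshevsky_2010} handling the accuracy clause), and then transfer the hardness back to the primal norm. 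The ``main obstacle'' you flag --- carrying out the Gr\"otschel--Lov\'asz--Schrijver weak-membership/weak-optimization equivalence with explicit well-boundedness constants for $\mathcal{C}_1$ --- is precisely what the paper outsources to Theorems 3 and 10 of \cite{friedland2016computational}, which package the polynomial-time reduction between approximating a norm and approximating its dual, so your argument is a self-contained unpacking of that citation rather than a genuinely different proof.
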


\begin{proof}
Note that $\|\varphi\|_{\circ} = \sup\{|\langle \varphi, \psi\rangle| \ |\ \|\psi\|_\pm \leq 1\} = \sup\{\langle \varphi, \psi\rangle\ |\ \psi \in \mathcal{C}_1\}$ is the dual norm for $\|\cdot\|_\pm$. The approximation of $\|\cdot\|_{\circ}$ can be reduced to approximation of the norm $\|\cdot\|_\pm$ in polynomial time from theorems 3 and 10 of \cite{friedland2016computational}. Our main idea is to give a polynomial-time reduction of an \textsc{np}-Hard problem to an approximation of $\|\cdot\|_{\circ}$. In particular, we prove that calculating the $\infty,1$ subordinate matrix norm is polynomial-time reducible to $\sup\{\langle \varphi, \psi\rangle\ |\ \psi \in \mathcal{S}_1\}$. Without loss of generality, assume $p=2$ and $d:= r_1 = r_2$.

The decision version of $\sup\{\langle \varphi, \psi\rangle\ |\ \psi \in \mathcal{S}_1\}$ is:

\textit{Question:} Does there exist $\theta_{x_k} \in \{-1,1\}$ for all $x_k \in [d]$ with $k=1,2$ such that for a given $L$ we have
\begin{equation*}
\textstyle\sum_{x_1 = 1}^d\sum_{x_2=1}^d \varphi_{x_1x_2} \theta_{x_1}\theta_{x_2}  \geq L \;?    
\end{equation*}
From Proposition 1 of \cite{Rohn_2000}, we have $\|W\|_{\infty,1} = \{\max \sum_{i,j} W_{ij}x_iy_j \:|\: x_i,y_j \in \{-1,1\}\}$ for a matrix $W$. The decision version of the $\infty,1$ subordinate matrix norm for the special case of $M$-matrices can be written as:

\textit{Question:} Does there exist $x_i,y_i \in \{-1,1\}$ for all $i \in [d]$ such that for a given $L' \geq 0$ and a \textit{symmetric, positive definite} matrix $W \in \mathbb{R}^{d \times d}$ satisfying $w_{ij} \leq 0$ for all $i \neq j$ and
\begin{equation*}
    \textstyle\sum_{i= 1}^d \sum_{j=1}^d w_{ij} x_iy_j \geq L' \;?
\end{equation*}
Clearly, setting $L =L'$ and $\varphi = W$ is a valid polynomial time reduction. Since $\mathcal{C}_1 = \conv(\mathcal{S}_1)$ and $\langle \varphi, \psi\rangle$ is linear, $\|\varphi\|_{\circ} = \sup\{\langle \varphi, \psi\rangle\ |\ \psi \in \mathcal{S}_1\}$. The result now follows since approximately solving any $\infty,p$ subordinate matrix norm, where $p \in [1,\infty)$, to arbitrary accuracy is \textsc{np}-hard \cite{Hendrickx_Olshevsky_2010}. In particular, Theorem 5 of \cite{Rohn_2000} shows that it is an \textsc{np}-hard problem for $M$-matrices.
\end{proof}

The same proof of Corollary 3.2 in \cite{bugg2022nonnegative} combined with the above result establishes \textsc{np}-completeness:

\begin{corollary}
[\cite{bugg2022nonnegative}]\label{corollary:npcnorm} 
Given $K \in\mathbb{R}_+$ and $\psi \in \mathbb{R}^{r_1\times\cdots\times r_p}$, it is \textsc{np}-complete to determine if $\|\psi\|_\pm \leq K$.
\end{corollary}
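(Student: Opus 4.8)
The plan is to verify the two ingredients of \textsc{np}-completeness separately: membership in \textsc{np}, and \textsc{np}-hardness. The hardness is essentially inherited from Proposition \ref{prop:nphard}, so the substantive new step is exhibiting a polynomial-size, polynomial-time-verifiable certificate for the ``yes'' instances.

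For membership in \textsc{np}, I would first use that $\mathcal{C}_1$ is compact with $0$ in its interior (Proposition \ref{prop:well-posed}), so the gauge infimum is attained and $\|\psi\|_\pm \leq K$ holds if and only if $\psi \in K\mathcal{C}_1 = \mathcal{C}_K = \conv(\mathcal{S}_K)$. Flattening tensors into $\mathbb{R}^\pi$, Carath\'eodory's theorem guarantees that every point of $\conv(\mathcal{S}_K)$ is a convex combination of at most $\pi+1$ elements of $\mathcal{S}_K$, each of which is a rank-1 tensor $K\cdot\bigotimes_{k=1}^p \theta^{(k)}$ specified by sign vectors $\theta^{(k)} \in \{-1,1\}^{r_k}$ and hence describable in $O(\rho)$ bits. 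The certificate is therefore a list of at most $\pi+1$ such sign patterns, whose total size is polynomial in the input. Given the resulting candidate vertices $v_1,\dots,v_{\pi+1}$, the verifier checks $\psi \in \conv\{v_1,\dots,v_{\pi+1}\}$ by solving the linear feasibility system $\sum_i \lambda_i v_i = \psi$, $\sum_i \lambda_i = 1$, $\lambda_i \geq 0$; this is a polynomial-size linear program, solvable in polynomial time, and feasible exactly when $\psi \in \mathcal{C}_K$. This places the problem in \textsc{np}.

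For \textsc{np}-hardness I would mirror the proof of Corollary 3.2 of \cite{bugg2022nonnegative} and reduce from Proposition \ref{prop:nphard}. The key supporting fact is that $\|\psi\|_\pm$ is the optimal value of a linear program whose data are the rational entries of $\psi$ together with $\pm 1$ vertex coordinates, so standard bit-complexity bounds for linear programming show that $\|\psi\|_\pm$ is rational with size polynomial in the input. A polynomial-time oracle deciding ``$\|\psi\|_\pm \leq K$'' would then compute $\|\psi\|_\pm$ to arbitrary accuracy by binary search in polynomially many queries, contradicting the \textsc{np}-hardness of approximating the norm established in Proposition \ref{prop:nphard}. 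Combined with the membership shown above, this yields \textsc{np}-completeness.

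The step I expect to demand the most care is the membership argument, specifically the observation that the convex-combination weights need not be carried in the certificate: it suffices to guess the polynomially many extreme rank-1 tensors and recover the weights via a polynomial-size linear program. The polynomial bit-complexity bound on $\|\psi\|_\pm$ is the second point needing explicit justification, since it is exactly what keeps the binary-search reduction polynomial.
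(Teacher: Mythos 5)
Your proposal is correct and follows essentially the same route as the proof the paper invokes from Corollary 3.2 of \cite{bugg2022nonnegative}: membership in \textsc{np} via Carath\'eodory (a certificate of at most $\pi+1$ sign-pattern vertices of $\mathcal{S}_K$, each encodable in $O(\rho)$ bits), and \textsc{np}-hardness inherited from Proposition~\ref{prop:nphard} via a binary-search (Turing) reduction whose polynomial bound rests on the bit-complexity of basic LP solutions. Your one deviation --- having the verifier recover the convex weights by solving a polynomial-size feasibility LP rather than carrying polynomially bounded rational weights in the certificate --- is a harmless, equally valid variant.
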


Rademacher complexity, from computational learning theory, is used to characterize the richness of a class of functions \cite{bartlett2002,srebro2010}. Roughly speaking, function classes with lower Rademacher complexity can be learned using less samples. From the following proposition, one can check that the norm $\|\ \cdot\|_\pm$ has an exponentially smaller Rademacher complexity than the max and Frobenius norms for tensors.
\begin{proposition} 
[\textbf{Stochastic Complexity}]\label{Rcompl}
We have $\mathsf{R}(\mathcal{C}_\lambda) \leq \mathsf{W}(\mathcal{C}_\lambda) \leq 2\lambda\sqrt{\rho/n}$, where $\mathsf{R}(\cdot)$ and $\mathsf{W}(\cdot)$ are the Rademacher and worst case Rademacher complexities. 
\end{proposition}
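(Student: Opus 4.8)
The plan is to prove the two inequalities separately, with essentially all of the work in the second. The first inequality $\mathsf{R}(\mathcal{C}_\lambda)\le\mathsf{W}(\mathcal{C}_\lambda)$ is immediate from the definitions: writing $g(X)=\mathbb{E}_\sigma\big(\sup_{\psi\in\mathcal{C}_\lambda}\tfrac1n|\sum_i\sigma_i\psi_{x\langle i\rangle}|\big)$, the Rademacher complexity equals $\mathbb{E}_X[g(X)]$ while its worst-case version equals $\sup_X g(X)$, and an average never exceeds a supremum. So I would dispatch this in one line and focus on $\mathsf{W}$.

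For the second inequality I would first reduce from the polytope $\mathcal{C}_\lambda$ to its vertex set $\mathcal{S}_\lambda$. Fixing $X$ and a sign vector $\sigma$, the map $\psi\mapsto\sum_i\sigma_i\psi_{x\langle i\rangle}$ is linear in $\psi$, and since $\mathcal{C}_\lambda=\conv(\mathcal{S}_\lambda)$ by Proposition~\ref{prop:convequiv}, the supremum over $\mathcal{C}_\lambda$ is attained at an extreme point, giving $\sup_{\psi\in\mathcal{C}_\lambda}\sum_i\sigma_i\psi_{x\langle i\rangle}=\sup_{\psi\in\mathcal{S}_\lambda}\sum_i\sigma_i\psi_{x\langle i\rangle}$. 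Moreover $\mathcal{S}_\lambda$ is symmetric ($\psi\in\mathcal{S}_\lambda\Rightarrow-\psi\in\mathcal{S}_\lambda$), so the supremum of the absolute value coincides with the supremum of the signed quantity, which lets me drop the absolute value at no cost. It therefore suffices to bound $\mathbb{E}_\sigma\sup_{\psi\in\mathcal{S}_\lambda}\tfrac1n\sum_i\sigma_i\psi_{x\langle i\rangle}$ uniformly over $X$.

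The core step is a maximal inequality over the finite set $\mathcal{S}_\lambda$. For each fixed $\psi\in\mathcal{S}_\lambda$ the entries $\psi_{x\langle i\rangle}$ lie in $\{-\lambda,\lambda\}$, so $\sum_i\sigma_i\psi_{x\langle i\rangle}$ is a sum of independent mean-zero terms each bounded by $\lambda$ in absolute value, hence $\lambda\sqrt n$-sub-Gaussian. The number of distinct tensors in $\mathcal{S}_\lambda$ is at most the number of sign choices $\theta^{(k)}\in\{-1,1\}^{r_k}$, namely $\prod_k 2^{r_k}=2^{\rho}$. Massart's finite-class lemma (the sub-Gaussian maximal inequality) then yields $\mathbb{E}_\sigma\sup_{\psi\in\mathcal{S}_\lambda}\tfrac1n\sum_i\sigma_i\psi_{x\langle i\rangle}\le \tfrac{\lambda\sqrt n\,\sqrt{2\ln 2^{\rho}}}{n}=\lambda\sqrt{2\ln 2}\,\sqrt{\rho/n}$. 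This bound is independent of $X$, since each evaluated vertex is a $\pm\lambda$ vector of Euclidean norm exactly $\lambda\sqrt n$ regardless of the sample locations; hence taking $\sup_X$ leaves it unchanged, and as $\sqrt{2\ln 2}<2$ we conclude $\mathsf{W}(\mathcal{C}_\lambda)\le 2\lambda\sqrt{\rho/n}$.

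I expect the main obstacle to be making the cardinality and sub-Gaussian step airtight: justifying the count $|\mathcal{S}_\lambda|\le 2^{\rho}$ (the relevant quantity is the number of attainable sign patterns, not the much larger $\pi$), and invoking the maximal inequality in the form where the governing scale is the $\ell_2$-norm $\lambda\sqrt n$ of each evaluated vertex. The convex-hull reduction and the symmetry argument for the absolute value are routine, but worth stating explicitly, since together they are exactly what produce a rate scaling with $\sqrt{\rho}$ rather than $\sqrt{\pi}$, i.e.\ the exponential improvement over the max and Frobenius balls claimed before the proposition.
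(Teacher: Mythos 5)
Your proposal is correct and follows essentially the same route as the paper's proof: reduce $\mathsf{W}(\mathcal{C}_\lambda)$ to the finite vertex set $\mathcal{S}_\lambda$ via convexity, drop the absolute value by symmetry, and apply Massart's finite-class lemma with $\ell_2$-scale $\lambda\sqrt{n}$ and cardinality bound $\#\mathcal{S}_\lambda \leq 2^{\rho}$, yielding $\lambda\sqrt{2\log 2}\,\sqrt{\rho/n} \leq 2\lambda\sqrt{\rho/n}$. You merely spell out two steps the paper leaves implicit (the extreme-point/symmetry reduction, which the paper handles by citation, and the $2^{\rho}$ count), which is a fair and careful elaboration rather than a different argument.
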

\begin{proof}
The proof is similar to Proposition 3.3 of \cite{bugg2022nonnegative}. By definition, $\mathsf{R}(\mathcal{C}_\lambda) = \mathbb{E}_{\sigma}(\sup_{\Psi\in\mathcal{C}_\lambda}\textstyle\frac{1}{n}|\sum_{i=1}^n\sigma_i\cdot \psi_{x\langle i\rangle}|)$ and $\mathsf{W}(\mathcal{C}_\lambda) = \sup_{X} \mathsf{R}(\mathcal{C}_\lambda)$, where $\sigma_i$ are independent and identically distributed (i.i.d.)  Rademacher random variables (i.e., $\sigma_i = \pm 1$ with probability $\frac{1}{2}$) \cite{bartlett2002,srebro2010}. Hence, $\mathsf{R}(\mathcal{C}_\lambda) \leq \mathsf{W}(\mathcal{C}_\lambda)$. Recall that $\mathcal{C}_{\lambda} = \conv(\mathcal{S}_\lambda)$ by Proposition \ref{prop:convequiv}. This means $\mathsf{W}(\mathcal{C}_\lambda) = \mathsf{W}(\mathcal{S}_\lambda)$ \cite{ledoux1991,bartlett2002}. Next, observe that
\begin{equation}
\begin{aligned}
\mathsf{W}(\mathcal{C}_\lambda) &=  \mathsf{W}(\mathcal{S}_\lambda) \textstyle=\sup_X\mathbb{E}_\sigma\big(\sup_{\psi\in\mathcal{S}_\lambda}\textstyle\frac{1}{n}\big|\sum_{i=1}^n\sigma_i\cdot \psi_{x\langle i\rangle}\big|\big)\\
&\textstyle=\sup_X\mathbb{E}_\sigma\big(\max_{\psi\in\mathcal{S}_{\lambda}}\textstyle\frac{1}{n}\cdot\sum_{i=1}^n\sigma_i\cdot \psi_{x\langle i\rangle}\big) \leq \sup_X r\sqrt{2\log \#\mathcal{S}_\lambda}/n
\end{aligned}
\end{equation}
where in the second line since the set $\mathcal{S}_\lambda$ is finite, we used the Finite Class Lemma \cite{massart2000} with $r = \max_{\psi\in\mathcal{S}_\lambda}\textstyle\sqrt{\sum_{i=1}^n(\psi_{x\langle i\rangle})^2} \leq \lambda\sqrt{n}$, and replaced the supremum with a maximum. This inequality on $r$ is because $\mathcal{S}_\lambda$ consists of tensors whose entries are from $\{-\lambda,\lambda\}$. Thus $\mathsf{W}(\mathcal{C}_\lambda) \leq \lambda\sqrt{(2\log2)\cdot \rho/n} \leq \lambda\cdot 2\sqrt{\rho/n}$. 
\end{proof}

\section{Algorithm for Tensor Completion}\label{sec5}
We now turn our attention towards numerical solution of the tensor completion problem (\ref{eq:tencompl}) using our norm $\|\psi\|_\pm$.

\subsection{Complexity Analysis of Tensor Completion}
By interpreting the tensor completion problem (\ref{eq:tencompl}) as a \emph{convex aggregation} problem \cite{nemirovski2000topics,tsybakov2003optimal,lecue2013empirical} for a finite set of functions, one can arrive at the generalization bound for the solution. Interestingly, these generalization bounds are the same in the special case of nonnegative tensors \cite{bugg2022nonnegative}. We believe this is because the proof for nonnegative tensors did not exploit the non-negativity, and hence could have led to non-tight generalization bounds. For completeness, we state these statistical guarantees in the following two results:

\begin{proposition} [\cite{lecue2013empirical}]
Suppose $|y| \leq b$ almost surely. Given any $\delta > 0$, with probability at least $1-4\delta$ we have that
\begin{multline}
\mathbb{E}\big((y - \psi_x)^2\big) \leq \min_{\varphi \in \mathcal{C}_\lambda} \mathbb{E}\big((y-\varphi_x)^2\big) +\\
c_0\cdot \max\big[b^2,\lambda^2\big]\cdot \max\big[\zeta_n, \textstyle\frac{\log(1/\delta)}{n}\big],
  \end{multline}
where $c_0$ is an absolute constant and 
\begin{equation}
\label{eqn:zn}
    \zeta_n = \begin{cases} \frac{2^\rho}{n}, & \text{if } 2^\rho \leq \sqrt{n}\\
    \sqrt{\frac{1}{n}\log\Big(\frac{e2^\rho}{\sqrt{n}}\Big)}, & \text{if } 2^\rho > \sqrt{n}\end{cases}
\end{equation}
\end{proposition}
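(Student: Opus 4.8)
The plan is to recognize the least-squares tensor completion problem restricted to the feasible set $\mathcal{C}_\lambda$ as an instance of the \emph{convex aggregation} problem, and then invoke the high-probability oracle inequality of \cite{lecue2013empirical} essentially verbatim. The first step is to exhibit the dictionary. By Proposition \ref{prop:convequiv} we have $\mathcal{C}_\lambda = \conv(\mathcal{S}_\lambda)$, and $\mathcal{S}_\lambda$ is a \emph{finite} set. Interpreting each tensor as a function $\psi : \mathcal{R} \to \mathbb{R}$ that maps an index $x$ to the entry $\psi_x$, the elements of $\mathcal{S}_\lambda$ form a finite dictionary of size $M := \#\mathcal{S}_\lambda$, and every feasible point $\varphi \in \mathcal{C}_\lambda$ is exactly a convex combination of dictionary functions. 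Thus minimizing $\frac{1}{n}\sum_i (y\langle i\rangle - \psi_{x\langle i\rangle})^2$ over $\mathcal{C}_\lambda$ is precisely empirical risk minimization over the convex hull of a finite dictionary, which is the setting of the cited result.

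Second, I would verify the boundedness hypotheses required by the aggregation theorem. Since every tensor in $\mathcal{S}_\lambda$ has entries in $\{-\lambda,\lambda\}$, convexity gives $|\varphi_x| \leq \lambda$ for all $x \in \mathcal{R}$ and all $\varphi \in \mathcal{C}_\lambda$; together with the assumption $|y| \leq b$ almost surely, this bounds the squared-loss class, and after rescaling it is what produces the factor $\max[b^2,\lambda^2]$ in the stated inequality. Third, I would count the dictionary: a sign rank-1 tensor is determined by the $p$ sign vectors $\theta^{(k)} \in \{-1,1\}^{r_k}$, so $M = \#\mathcal{S}_\lambda \leq 2^{\sum_k r_k} = 2^\rho$. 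Substituting $M = 2^\rho$ into the optimal convex aggregation rate, which equals $M/n$ when $M \leq \sqrt{n}$ and $\sqrt{\frac{1}{n}\log(eM/\sqrt{n})}$ when $M > \sqrt{n}$, reproduces exactly the two cases of $\zeta_n$ in \eqref{eqn:zn}. The confidence level $1-4\delta$, the additional deviation term $\log(1/\delta)/n$ inside the maximum, and the absolute constant $c_0$ are then inherited directly from the cited theorem.

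The main obstacle is not in the structure of the argument but in matching the hypotheses of \cite{lecue2013empirical} exactly, in particular the normalization: the cited oracle inequality is stated for a dictionary and response bounded by a fixed constant, so I would rescale the problem by $\max[b,\lambda]$ and confirm that the resulting multiplicative constant stays absolute (independent of $n$, $M$, $\lambda$, $b$, and the tensor dimensions). A secondary point to check carefully is that the exponentially large cardinality $M = 2^\rho$ enters the rate only through a logarithm in the regime $M > \sqrt{n}$, so that the exponential dictionary size still yields a tractable $\zeta_n$ rather than a vacuous bound; this is precisely the feature of the convex aggregation rate (as opposed to the model-selection or linear aggregation rates) that makes the result useful here and consistent with the small Rademacher complexity established in Proposition \ref{Rcompl}.
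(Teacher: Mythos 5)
Your proposal is correct and follows exactly the route the paper takes: the paper states this proposition as a cited result of \cite{lecue2013empirical}, justified only by the remark that problem (\ref{eq:tencompl}) is an instance of convex aggregation over the finite dictionary $\mathcal{S}_\lambda$ (with $\#\mathcal{S}_\lambda \leq 2^\rho$, the same count used in Proposition \ref{Rcompl}), whose convex hull is $\mathcal{C}_\lambda$ by Proposition \ref{prop:convequiv}. Your write-up simply makes explicit the verification steps (boundedness via $|\varphi_x|\leq\lambda$ on $\mathcal{C}_\lambda$, the rescaling by $\max[b,\lambda]$, and substituting $M = 2^\rho$ into the optimal convex aggregation rate) that the paper leaves implicit.
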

\begin{corollary}[\cite{bugg2022nonnegative}]
\label{cor:mtrs}
Suppose $\psi\in\Gamma$ is a tensor (satisfying Assumption \ref{regcondn}) with $\rank(\psi) = k$ and $\|\psi\|_{\max} \leq \mu$. Under an additive noise model, if $(x\langle i \rangle, y\langle i \rangle)$ are independent and identically distributed with $|y\langle i \rangle - \varphi_{x\langle i\rangle}| \leq e$ almost surely and $\mathbb{E}y\langle i\rangle = \psi_{x\langle i\rangle}$. Then given any $\delta > 0$, with probability at least $1-4\delta$ we have 
\begin{equation}
\mathbb{E}\big((y - \widehat{\psi}_x)^2\big) \leq e^2 + c_0\cdot \big(\mu k + e)^2\cdot \max\big[\zeta_n, \textstyle\frac{\log(1/\delta)}{n}\big],
\end{equation}
where $\zeta_n$ is as in (\ref{eqn:zn}) and $c_0$ is an absolute constant.
\end{corollary}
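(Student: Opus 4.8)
The plan is to derive this corollary directly from the preceding generalization proposition (the one attributed to \cite{lecue2013empirical}), by making a careful choice of the radius $\lambda$ and the boundedness constant $b$, and then using Proposition \ref{prop:ranks} to translate the hypotheses on $\rank(\psi)$ and $\|\psi\|_{\max}$ into a statement about the gauge norm. The role of Assumption \ref{regcondn} is precisely to make Proposition \ref{prop:ranks} available, so that the true tensor $\psi$ is guaranteed to lie in the feasible set $\mathcal{C}_\lambda$ for a suitable $\lambda$.

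First I would set $\lambda = \mu k$. Since $\psi \in \Gamma$ satisfies Assumption \ref{regcondn}, Proposition \ref{prop:ranks} gives $\|\psi\|_\pm \leq \rank(\psi)\cdot\|\psi\|_{\max} = k\mu = \lambda$, so $\psi \in \mathcal{C}_\lambda$. Because the true tensor is itself feasible, the approximation (bias) term in the proposition can be controlled by evaluating at $\varphi = \psi$:
\[
\min_{\varphi \in \mathcal{C}_\lambda} \mathbb{E}\big((y - \varphi_x)^2\big) \leq \mathbb{E}\big((y - \psi_x)^2\big).
\]
Under the additive noise model, $\mathbb{E}y\langle i\rangle = \psi_{x\langle i\rangle}$ and $|y\langle i\rangle - \psi_{x\langle i\rangle}| \leq e$ almost surely, so the residual $y - \psi_x$ is a zero-mean random variable bounded by $e$, giving $\mathbb{E}\big((y - \psi_x)^2\big) \leq e^2$. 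This is what produces the leading $e^2$ term in the stated bound.

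Next I would fix the boundedness constant. Writing $y = \psi_x + \epsilon$ with $|\epsilon| \leq e$ and $|\psi_x| \leq \|\psi\|_{\max} \leq \mu$, the triangle inequality yields $|y| \leq \mu + e =: b$ almost surely. Substituting $\lambda = \mu k$ and $b = \mu + e$ into the proposition's variance term requires bounding $\max[b^2, \lambda^2] = \max[(\mu+e)^2, (\mu k)^2]$. Both arguments are dominated by $(\mu k + e)^2$: we have $(\mu k)^2 \leq (\mu k + e)^2$ since $e \geq 0$, and $(\mu + e)^2 \leq (\mu k + e)^2$ since $k \geq 1$ (the rank of a nonzero tensor is at least one). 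Combining these estimates and absorbing constants into $c_0$ gives exactly the claimed inequality, with $\zeta_n$ inherited unchanged from (\ref{eqn:zn}).

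This argument is essentially routine bookkeeping once the right substitutions are identified; there is no deep obstacle. The only point demanding care is the choice $\lambda = \mu k$ and the verification that $\psi$ is feasible, which is where Assumption \ref{regcondn} is genuinely used through Proposition \ref{prop:ranks}. The comparison $\max[(\mu+e)^2,(\mu k)^2] \leq (\mu k + e)^2$ hinges on $k \geq 1$, so I would note explicitly that the tensor is nonzero (otherwise the completion problem is degenerate and the bound holds trivially with $\widehat\psi = 0$).
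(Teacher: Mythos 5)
Your proposal is correct and follows exactly the route the paper intends: the corollary is obtained from the preceding proposition of \cite{lecue2013empirical} by taking $\lambda = \mu k$ (justified via Proposition \ref{prop:ranks}, which is where Assumption \ref{regcondn} enters), bounding the approximation term by $\mathbb{E}\big((y-\psi_x)^2\big) \leq e^2$ since $\psi \in \mathcal{C}_{\mu k}$ is feasible, and noting $\max\big[b^2,\lambda^2\big] = \max\big[(\mu+e)^2,(\mu k)^2\big] \leq (\mu k + e)^2$ with $b = \mu + e$. Your explicit remarks on $k \geq 1$ and the degenerate zero-tensor case are careful touches the paper (which defers the proof to \cite{bugg2022nonnegative}) does not spell out, but they do not change the argument.
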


\begin{remark}
The above result achieves the information-theoretic rate when the rank $k = O(1)$.
\end{remark}

Unsurprisingly, the tensor completion problem (\ref{eq:tencompl}) is \textsc{np}-hard, because approximating the norm $\|\cdot\|_\pm$ is \textsc{np}-hard and there is a polynomial-time reduction of the problem (\ref{eq:tencompl}) to the \textsc{np}-hard weak membership problem \cite{bugg2022nonnegative}.

\begin{proposition}
\label{prop:nntnph}
The tensor completion problem (\ref{eq:tencompl}) is \textsc{np}-hard to solve to arbitrary accuracy. Also, its decision version is \textsc{np}-complete.
\end{proposition}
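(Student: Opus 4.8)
The statement to prove has two parts: (1) the tensor completion problem \eqref{eq:tencompl} is \textsc{np}-hard to solve to arbitrary accuracy, and (2) its decision version is \textsc{np}-complete. The plan is to reduce the already-established hardness of the norm $\|\cdot\|_\pm$ (Proposition \ref{prop:nphard} and Corollary \ref{corollary:npcnorm}) to the optimization problem, leveraging the standard equivalence between membership/separation and optimization for convex bodies.

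For the \textsc{np}-hardness of part (1), I would proceed by a polynomial-time reduction from the weak membership problem for the unit ball $\mathcal{C}_1 = \{\psi : \|\psi\|_\pm \leq 1\}$. The key observation is that the least-squares objective in \eqref{eq:tencompl} is minimized at an interpolating tensor when one exists: given a target tensor $\varphi$, I would construct an instance of \eqref{eq:tencompl} whose observations $(x\langle i\rangle, y\langle i\rangle)$ enumerate all entries of $\varphi$ (so $n = \pi$ and $y\langle i\rangle = \varphi_{x\langle i\rangle}$), with the norm bound set to $\lambda = K$. Then the optimal objective value is $0$ if and only if $\varphi$ is feasible, i.e. $\|\varphi\|_\pm \leq K$; otherwise the projection of $\varphi$ onto $K\mathcal{C}_1$ yields a strictly positive optimum. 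An arbitrarily-accurate approximation to the optimal value would therefore let us distinguish $\|\varphi\|_\pm \leq K$ from $\|\varphi\|_\pm > K$ (modulo a gap argument), which is \textsc{np}-hard by Corollary \ref{corollary:npcnorm}. Alternatively, and as the paper's preceding sentence indicates, I would invoke the polynomial-time reduction of the weak membership problem for $\mathcal{C}_1$ to \eqref{eq:tencompl} together with the \textsc{np}-hardness of weak membership, which follows from the equivalence of weak membership, weak separation, and weak optimization over convex bodies (the Grötschel--Lovász--Schrijver framework).

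For part (2), establishing \textsc{np}-completeness of the decision version, I would show membership in \textsc{np} in addition to hardness. The decision problem is: given $\lambda$, the data, and a threshold $t$, does there exist a feasible $\psi$ with $\|\psi\|_\pm \leq \lambda$ achieving objective value at most $t$? Hardness is inherited directly from part (1). For membership in \textsc{np}, the natural certificate is a \textsc{cp}-type representation of a near-optimal $\psi$: since $\psi \in \lambda\mathcal{C}_1 = \conv(\lambda\mathcal{S}_1)$, by Carathéodory's theorem any feasible point is a convex combination of at most $\pi + 1$ vertices of $\lambda\mathcal{S}_1$, each vertex encoded by its defining sign vectors $\theta^{(k)} \in \{-1,1\}^{r_k}$ as in \eqref{eq:slam}. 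Verifying that such a combination satisfies the norm constraint and meets the objective threshold is a polynomial-time check. The hard part, and the main obstacle, will be the numerical-precision bookkeeping: because the problem is only hardness-to-\emph{arbitrary}-accuracy and the certificate involves continuous convex weights, I must argue that a polynomially-bounded-bit-length certificate suffices to certify the threshold — this requires a bound on the encoding length of an optimal (or sufficiently near-optimal) solution, controlling the bit-complexity of both the convex weights and the rational approximation of the objective value. I expect this to parallel the corresponding argument (Corollary 3.2) in \cite{bugg2022nonnegative}, so I would adapt that precision analysis rather than reconstruct it from scratch, citing the reduction machinery already invoked for the nonnegative case.
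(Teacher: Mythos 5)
Your proposal is correct and takes essentially the same route as the paper: the paper's proof likewise reduces the weak membership problem for $\mathcal{C}_1$ to (\ref{eq:tencompl}) by enumerating all $\pi$ entries as noiseless observations with $\lambda = 1$ and thresholding the optimal objective at $\delta$, with hardness of weak membership obtained from Proposition \ref{prop:nphard} via Theorem 10 of \cite{friedland2016computational} (precisely the Gr\"otschel--Lov\'asz--Schrijver-type equivalence you invoke). One caution: your first variant (deciding $\|\varphi\|_\pm \leq K$ from a zero versus nonzero optimum) would not stand on its own, since the positive optimum can be arbitrarily small and exact membership is not recoverable from arbitrary-accuracy optimization --- the $\delta$-tolerant weak-membership formulation you fall back on is exactly the fix --- while your Carath\'eodory certificate for \textsc{np}-membership fills in a detail the paper leaves to the analogous argument in \cite{bugg2022nonnegative}.
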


\begin{proof}
The proof for this proposition is similar to the proof of Proposition 4.4 of \cite{bugg2022nonnegative}. Define the ball of radius $\delta > 0$ centered at a tensor $\psi$ to be $B(\psi,\delta) = \{\varphi : \|\varphi - \psi\|_F \leq \delta\}$. Next define $W(\mathcal{C}_1,\delta) = \bigcup_{\psi\in\mathcal{C}_1} B(\psi,\delta)$ and $W(\mathcal{C}_1,-\delta) = \{\psi \in \mathcal{C}_1 : B(\psi,\delta) \subseteq\mathcal{C}_1\}$. The weak membership problem for $\mathcal{C}_1$ is that given a tensor $\psi$ and a $\delta > 0$ decide whether $\psi \in W(\mathcal{C}_1,\delta)$ or $\psi\notin W(\mathcal{C}_1,-\delta)$. Theorem 10 of \cite{friedland2016computational} shows that approximation of the norm $\|\cdot\|_{\pm}$ is polynomial-time reducible to the weak membership problem for $\mathcal{C}_1$. Since Proposition \ref{prop:nphard} shows that approximation of the norm $\|\cdot\|_{\pm}$ is NP-hard, the result follows if we can reduce the weak membership problem to (\ref{eq:tencompl}). 

Suppose we are given inputs $\psi$ and $\delta$ for the weak membership problem. Choose $x\langle i\rangle$ for $i = 1,\ldots,\pi$ such that each element in $\mathcal{R}$ is enumerated exactly once. Next choose $y\langle i\rangle = \psi_{x\langle i\rangle}$ and $\lambda = 1$. Finally, note if we solve (\ref{eq:tencompl}) and the minimum objective value is less than or equal to $\delta$, then we have $\psi \in W(\mathcal{C}_1,\delta)$; otherwise, we have $\psi \notin W(\mathcal{C}_1,-\delta)$. The result now follows since this was the desired reduction.
\end{proof}

\subsection{Numerical Algorithm}

The problem (\ref{eq:tencompl}) is a convex optimization formulation despite being \textsc{np}-hard. This property enables the application of various first-order convex optimization algorithms. In particular, we use a variant of the Frank-Wolfe algorithm called \emph{Blended Conditional Gradients} (BCG) \cite{braun2019blended}.  The main alternative approaches for iterative optimization seem to run into issues inherent to the implicit description of the feasible region; for instance, we do not have an effective barrier function available for interior point methods, nor do we have an efficient projection oracle that can be leveraged in projected gradient descent.

The BCG algorithm makes subsequent iterate updates based on the gradient of the objective function at the current iterate. It needs a weak separation oracle
to find a new vertex that reduces a gradient-based linear objective. The weak separation oracle is given in Algorithm \ref{alg:wso} \cite{bugg2022nonnegative}. The output of the oracle should either give a vertex that accomplishes separation, or a certificate that separation is not possible. We design our weak separation oracle using two algorithms, an integer optimization problem in (\ref{eqn:intprog}) and an alternating maximization heuristic. 

Since integer optimization is likely to be more computationally expensive than the alternating heuristic, we first try the latter several times with different randomized initializations. The heuristic adapts Algorithm \ref{alg:am}, where solutions are explored by toggling between $\pm 1$ \cite{bugg2022nonnegative}.  The following (nonlinear) separation objective is minimized over binary $\theta$, given a BCG-generated parameter $c$:

\begin{equation}
\label{eq:multobj}
z_M(\theta) := \sum_{x \in \mathcal{R}} \langle c_x,\psi_x- \textstyle\lambda\cdot\prod_{k=1}^p\theta_{x_k}\rangle.
\end{equation}

\begin{algorithm}[t]
   \caption{Weak Separation Oracle for $\mathcal{C}_\lambda$}
   \label{alg:wso}
\begin{algorithmic}
   \STATE {\bfseries Input:} linear objective $c \in \mathbb{R}^{r_1\times\cdots\times r_p}$, point $\psi\in\mathcal{C}_\lambda$, accuracy $K\geq 1$, gap estimate $\Phi > 0$, norm bound $\lambda$
   \STATE {\bfseries Output:} Either (1) vertex $\varphi\in\mathcal{S}_\lambda$ with $\langle c, \psi-\varphi\rangle \geq \Phi/K$, or (2) {\bfseries false:}  \\ $\langle c, \psi-\varphi\rangle \leq \Phi$ for all $\varphi\in\mathcal{C}_\lambda$ 
\end{algorithmic}
\end{algorithm}

\begin{algorithm}[t]
   \caption{Alternating Maximization}
   \label{alg:am}
\begin{algorithmic}
   \STATE {\bfseries Input:} linear objective $c \in \mathbb{R}^{r_1\times\cdots\times r_p}$, point $\psi\in\mathcal{C}_\lambda$, norm bound $\lambda$, incumbent \\ solution $\hat \theta \in \mathcal{S}_\lambda$, objective function $\textstyle z_M(\theta) := \sum_{x \in \mathcal{R}} \langle c_x,\psi_x- \textstyle\lambda\cdot\prod_{k=1}^p\theta_{x_k}\rangle$.
    \STATE {\bfseries Output:} Best known solution $\theta$
\vspace{.1cm}
   \STATE $\theta \gets \hat \theta$
   \STATE $z \gets z_M(\theta)$
   \FOR{$i=1$ {\bfseries to} $p$}
        \FOR{$k=1$ {\bfseries to} $r_{i}$}
            \STATE $\theta^{(i)}_{k} \gets -1 \cdot \theta^{(i)}_{k}$
            \IF{$z_{M} (\theta) > z$}
                \STATE  $z \gets z_M(\theta)$
            \ELSE
                \STATE{$\theta^{(i)}_{k} \gets -1 \cdot \theta^{(i)}_{k}$}
            \ENDIF
        \ENDFOR
   \ENDFOR
\end{algorithmic}
\end{algorithm}

The heuristic considers one dimension at a time, setting the corresponding entries of $\theta$ to optimize within the local neighbourhood defined by the given dimension (which can be done by simply considering the signs of $c$).  It runs in polynomial time (linear in the size of $\theta$) and has been seen to speed up the computation in our simulations by reducing calls to the integer optimization solver. However, note that the heuristic is merely that and cannot in general give a certificate for the non-existence of separation, which requires global optimization. Indeed, even in the case of a matrix ($p=2$), $z_M$ represents a generic form of NP-hard Quadratic Unconstrained Binary Optimization (QUBO) (see, e.g. \cite{lewis2017quadratic}).

If the heuristic is unable to yield a separating cut, we implement the following integer optimization problem. Note that $\langle \cdot, \cdot\rangle$ is the dot product of tensors obtained by flattening them into vectors.
\begin{equation}
\label{eqn:intprog}
\begin{aligned}
\max_{\varphi,\theta} & \langle c, \psi-\varphi\rangle\\  
 \mbox{s.t. } & \varphi_x = \lambda y_{1,k} & x\in\mathcal{R} \\
 & y_{x,k} \geq (-\theta_{x_k}-y_{x,k+1} -1) & k \in [p-1], x\in\mathcal{R} \\
&  y_{x,k} \geq (\theta_{x_k}+y_{x,k+1} -1) & k \in [p-1], x\in\mathcal{R}\\
& y_{x,k} \leq  (\theta_{x_k}-y_{x,k+1} +1) & k \in [p-1], x\in\mathcal{R}\\
&  y_{x,k} \leq  (-\theta_{x_k}+y_{x,k+1} +1) & k \in [p-1], x\in\mathcal{R}\\
&y_{x,p} = \theta_{x_p} & x\in\mathcal{R} \\
&\theta_{x_k} \in \{-1,1\} & k\in[p], x\in\mathcal{R}
\end{aligned}
\end{equation}
Note that, in principle, for some binary variable $q\in \{-1,1\}$, one can directly branch on the disjunction:
\[q = -1 \lor q = 1.\]
However, in implementation we choose instead to reformulate via 0-1 auxiliary variables, $\sigma := (q+1)/2$, as enforcing $\sigma \in \{0,1\}$ obviates the need for imposing $q \in \{-1,1\}$. Integer optimization solvers typically can only handle 0-1 binary variables by default instead of $\{-1,1\}$ due to the more extensive set of techniques developed for 0-1 variables.


Since we are looking for weak separation, the integer optimization solver is made to terminate when a solution with an objective greater than $\Phi/K$ is found. In case of no such solution, the dual bound $z$ from the solver serves as a no-separation certificate satisfying $\langle c, \psi-\varphi\rangle \leq z \leq \Phi$.

\begin{proposition}
    Using BCG approach, the convex optimization formulation of the tensor completion problem in (\ref{eq:tencompl}) can be solved in a linear number of calls to the Weak Separation Oracle in Algorithm \ref{alg:wso}.
\end{proposition}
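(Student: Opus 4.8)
The plan is to cast (\ref{eq:tencompl}) squarely into the setting for which the Blended Conditional Gradients (BCG) algorithm of \cite{braun2019blended} is analyzed, namely minimization of a smooth convex function over a polytope accessed through a weak separation oracle, and then to import their oracle-complexity guarantee. Three ingredients must be checked. First, the feasible region $\{\psi : \|\psi\|_\pm \leq \lambda\}$ coincides with $\mathcal{C}_\lambda = \lambda\mathcal{C}_1$, which by Proposition \ref{prop:convequiv} equals $\conv(\mathcal{S}_\lambda)$; since $\mathcal{S}_\lambda$ is a finite set (each $\theta_{x_k}\in\{-1,1\}$), $\mathcal{C}_\lambda$ is a polytope whose vertices lie in $\mathcal{S}_\lambda$. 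Second, the objective $f(\psi) = \frac{1}{n}\sum_{i=1}^n (y\langle i\rangle - \psi_{x\langle i\rangle})^2$ is a convex quadratic whose gradient is affine and hence Lipschitz, so $f$ is $L$-smooth with a constant $L$ controlled by the maximum per-entry observation count; I would record this constant explicitly in order to instantiate the BCG rate.

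Third, and most importantly, I would verify that Algorithm \ref{alg:wso} is a bona fide weak separation oracle over $\mathcal{C}_\lambda$ in the exact sense demanded by \cite{braun2019blended}. Here Proposition \ref{prop:bilp} does the heavy lifting: the integer program (\ref{eqn:intprog}) maximizes the linear form $\langle c, \psi - \varphi\rangle$ over $\widehat{\mathcal{S}}_\lambda = \mathcal{S}_\lambda$, that is, over the vertices of $\mathcal{C}_\lambda$, so its optimal value equals $\max_{\varphi\in\mathcal{C}_\lambda}\langle c,\psi-\varphi\rangle$. Terminating the solver as soon as a feasible $\varphi$ with objective exceeding $\Phi/K$ is found returns a valid separating vertex (output (1)); if no such solution exists, the solver's dual bound $z \leq \Phi$ certifies $\langle c, \psi-\varphi\rangle \leq \Phi$ for all $\varphi \in \mathcal{C}_\lambda$ (output (2)). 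I would also emphasize that the alternating-maximization heuristic only ever produces candidate vertices in $\mathcal{S}_\lambda$ and is used purely to shortcut to output (1); since it cannot certify non-separation, correctness of output (2) always rests on the integer program, so the oracle contract is met irrespective of whether the heuristic succeeds.

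With these three facts established, the proposition follows by invoking the convergence theorem of \cite{braun2019blended}: for an $L$-smooth convex function over a polytope, BCG drives the primal gap below a tolerance $\varepsilon$ using $O(1/\varepsilon)$ calls to the weak separation oracle, a count that is linear in the reciprocal of the target tolerance, while the interleaved simplex-descent steps consume no oracle access. The main obstacle I anticipate is the third ingredient: matching our implemented oracle -- a polynomial-time heuristic followed by an early-terminated integer program -- to the precise input/output specification that the BCG analysis presupposes, in particular the roles played by the accuracy parameter $K$ and the gap estimate $\Phi$, and arguing that early termination still returns either a genuinely separating vertex or a valid global no-separation certificate. By contrast, the smoothness and polytope conditions are routine, and the linear oracle-call bound is then inherited directly from \cite{braun2019blended}.
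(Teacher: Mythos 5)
There is a genuine gap, and it lies in which BCG convergence regime you invoke. You establish only that the objective is $L$-smooth and the feasible set a polytope, and then import the $O(1/\varepsilon)$ oracle-call bound, relabeling ``linear in $1/\varepsilon$'' as the claimed linear count. But the proposition (consistent with the paper's abstract and with standard usage) asserts \emph{linear convergence}, i.e., a geometric decrease of the gap, so that the number of oracle calls scales as $O(\log(1/\varepsilon))$ --- linear in the number of accuracy digits, not in $1/\varepsilon$. The paper's proof gets this from the result in \cite{braun2019blended} that BCG converges linearly when the feasible set is a polytope \emph{and the objective is strictly (strongly) convex}. Your argument never engages with the convexity modulus, and it could not do so as stated: the least-squares objective $f(\psi) = \frac{1}{n}\sum_{i=1}^n (y\langle i\rangle - \psi_{x\langle i\rangle})^2$ has Hessian $\frac{2}{n}\sum_i e_{x\langle i\rangle} e_{x\langle i\rangle}^\top$ (viewing $\psi$ as a flattened vector), which is singular whenever some tensor entry is unobserved --- the typical situation in completion, where $n \ll \pi$. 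So $f$ is \emph{not} strictly convex on the full tensor space, and smoothness alone only buys the sublinear rate you quoted.

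The missing idea, which is the entire content of the paper's short proof, is a reformulation that restores strict convexity: project the feasible polytope $\mathcal{C}_\lambda$ onto the coordinates indexed by the set $U$ of unique observed entries, replacing the feasible set by $\mathrm{Proj}_U(\mathcal{C}_\lambda)$ (still a polytope, being a linear image of one). On this reduced space every variable appears in the quadratic with a positive coefficient, so the objective is strictly convex there, the equivalence with (\ref{eq:tencompl}) is immediate, and the linearly convergent regime of BCG applies, yielding the claimed linear number of calls to Algorithm \ref{alg:wso}. One redeeming feature of your write-up is the careful verification that the heuristic-plus-early-terminated integer program genuinely implements the weak separation contract (separating vertex via Proposition \ref{prop:bilp}, no-separation certificate via the solver's dual bound); the paper takes this for granted, and your treatment of it is correct and worth keeping. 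But without the projection step and the strict-convexity argument, your proof establishes a strictly weaker oracle-complexity bound than the proposition claims.
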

\begin{proof}
    The linear convergence of BCG is guaranteed when the feasible set is a polytope and the objective function is strictly convex. Note that the feasible set $\|\psi\|_\pm\leq\lambda$ in Problem (\ref{eq:tencompl}) is a polytope. Further, it is easy to construct a strictly convex problem by projecting the feasible space onto the set of unique, observed tensor entries \cite{bugg2022nonnegative}.  Specifically, we use the equivalent reformulation in which we change the feasible set from $\{\psi : \|\psi\|_\pm\leq\lambda\} = C_\lambda$ to $\mathrm{Proj}_U(\mathcal{C}_\lambda)$ where the projection is done over the unique indices specified by the set $U$. Since the conditions for linear convergence are satisfied by our problem, our algorithm terminates in a linear number of oracle calls to Algorithm \ref{alg:wso}.
\end{proof} 

In fact, we find the BCG algorithm to be practically efficient for our problem since the weak linear separation oracle can accept (sufficiently good) suboptimal solutions that may be found at early termination of a global solver. When we design our weak separation oracle calls to an integer optimization problem, we explicitly define a tolerance for early termination. This works out well as integer optimization solvers usually discover near-optimal solutions fast and subsequently work hard to certify them.

\begin{figure*}
  \centering
  \begin{subfigure}{}
    \includegraphics[width=0.48\linewidth]{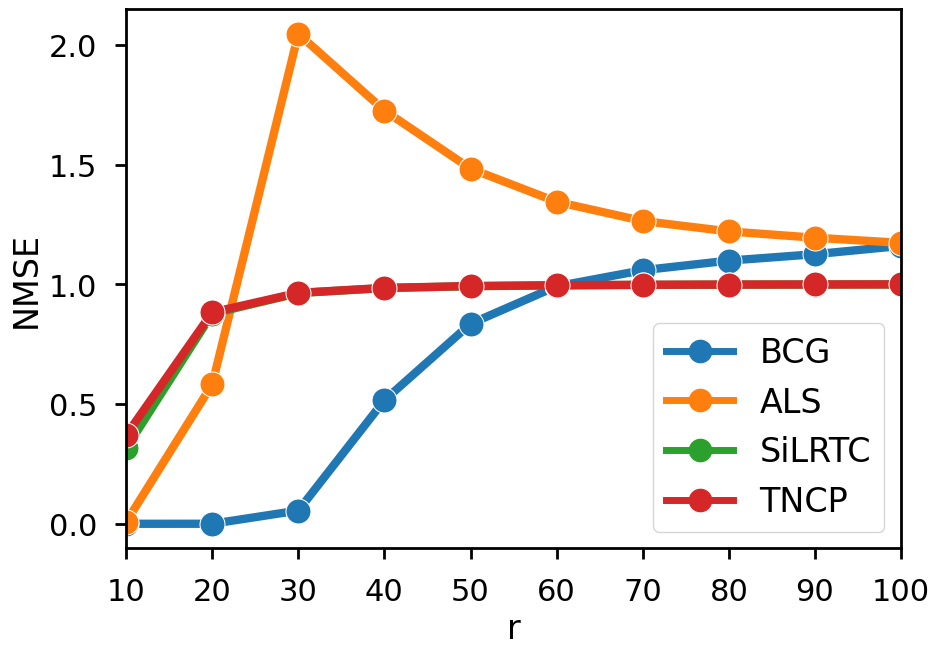}
  \end{subfigure}
  \begin{subfigure}{}
    \includegraphics[width=0.48\linewidth]{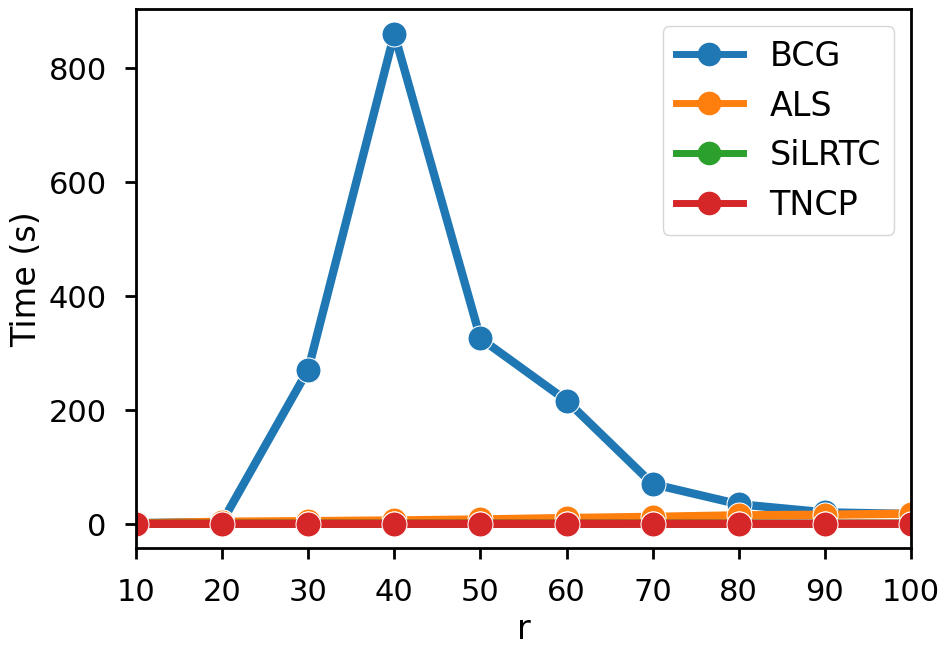}
  \end{subfigure}
  \caption{NMSE and computation time (in s) for order-3 tensors with size $r \times r \times r$ and $n = 1000$ samples.}
  \label{fig:1}
\end{figure*}
\begin{figure*}
\centering
  \begin{subfigure}{}
    \includegraphics[width=0.48\linewidth]{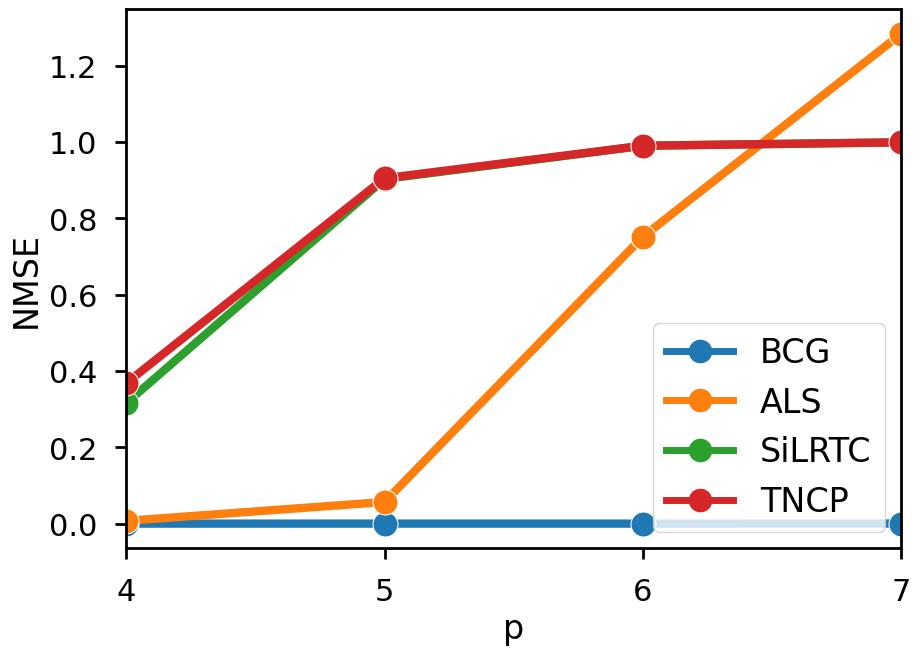}
  \end{subfigure}
  \begin{subfigure}{}
    \includegraphics[width=0.48\linewidth]{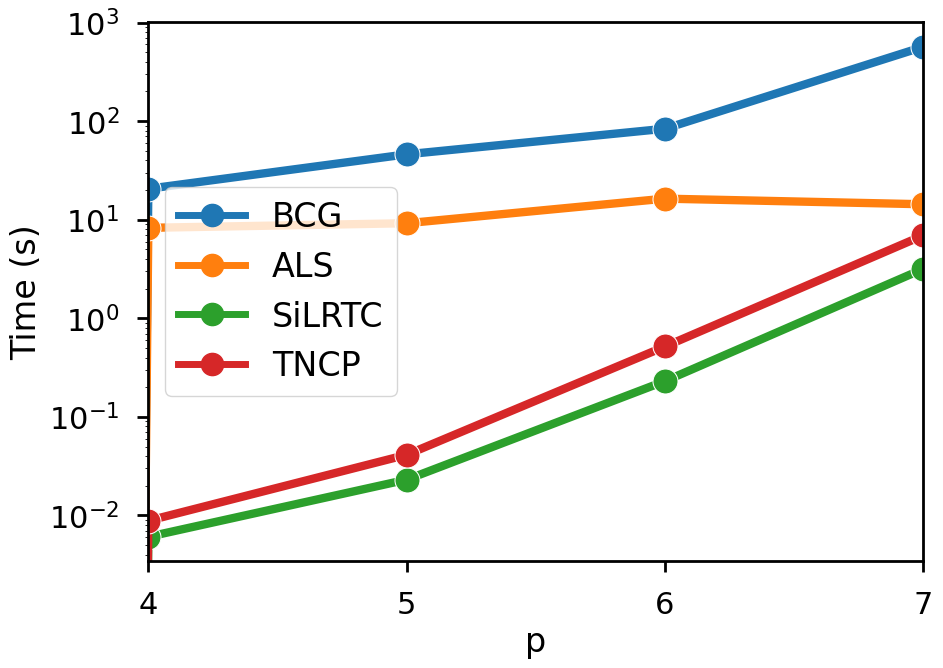}
  \end{subfigure}
  \caption{NMSE and computation time (in s) for increasing order tensors with size $10^{\times p}$ and $n=10,000$ samples. }
  \label{fig:2}

\label{fig:full1}
\end{figure*}

\begin{figure*}
\centering
  \begin{subfigure}{}
    \includegraphics[width=0.48\linewidth]{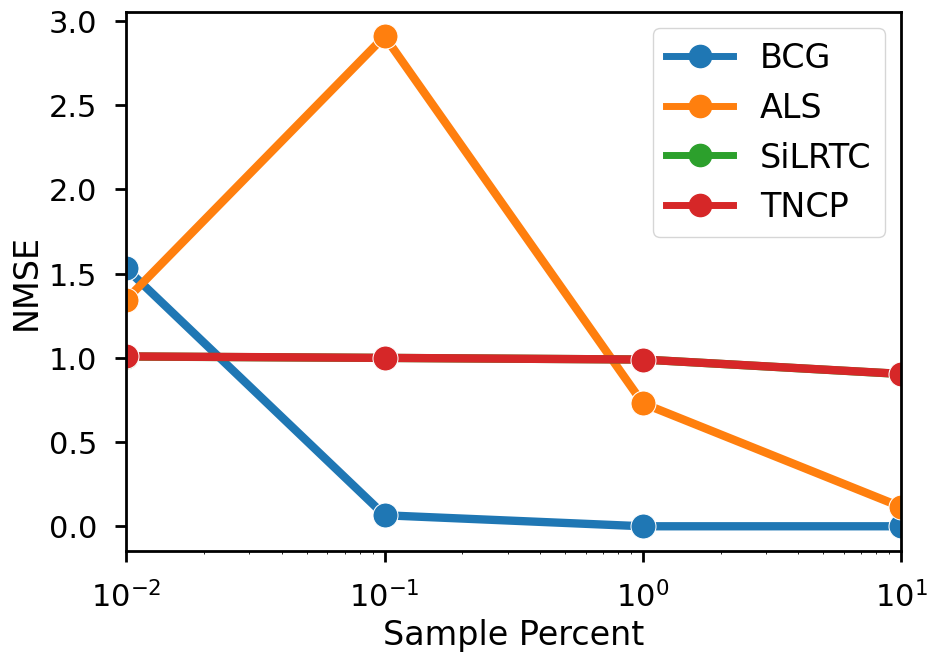}
  \end{subfigure}
  \begin{subfigure}{}
    \includegraphics[width=0.48\linewidth]{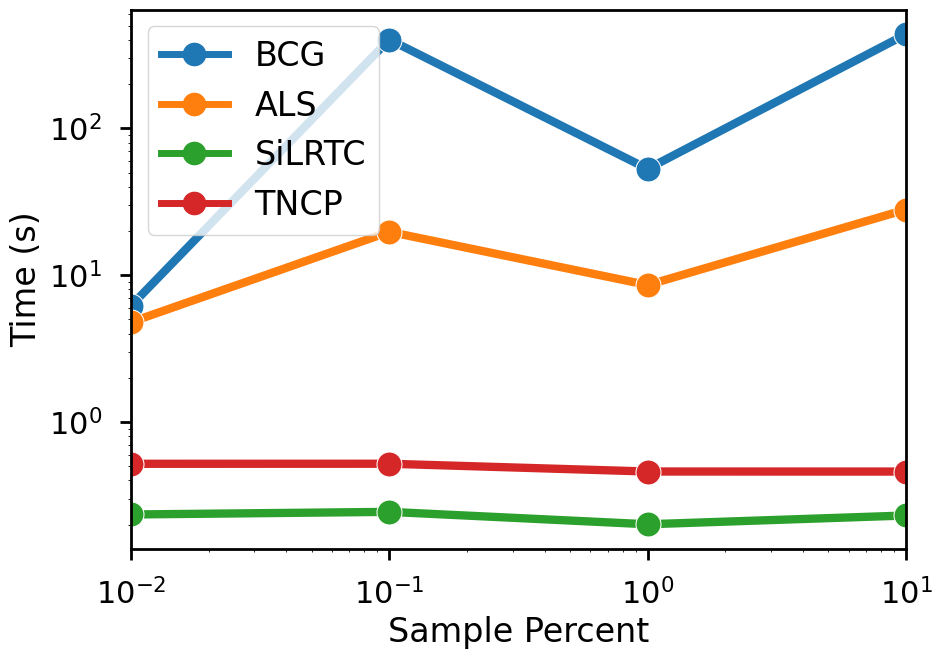}
  \end{subfigure}
  \caption{NMSE and computation time (in s) for tensors with size $10^{\times 6}$ and increasing  $n$ samples.}
  \label{fig:3}
\end{figure*}
\begin{figure*}
\centering
  \begin{subfigure}{}
    \includegraphics[width=0.48\linewidth]{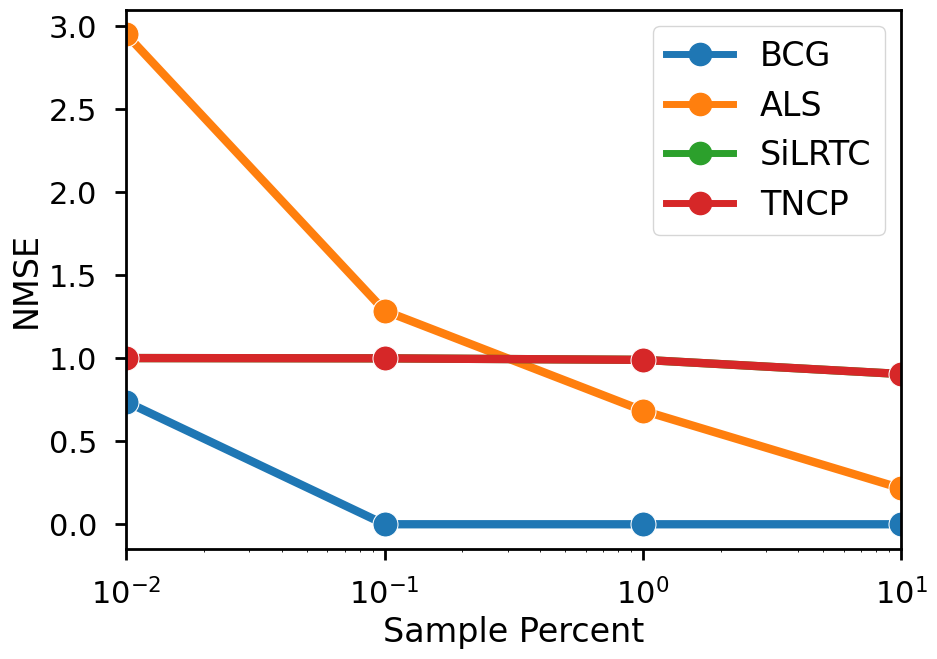}
  \end{subfigure}
  \begin{subfigure}{}
    \includegraphics[width=0.48\linewidth]{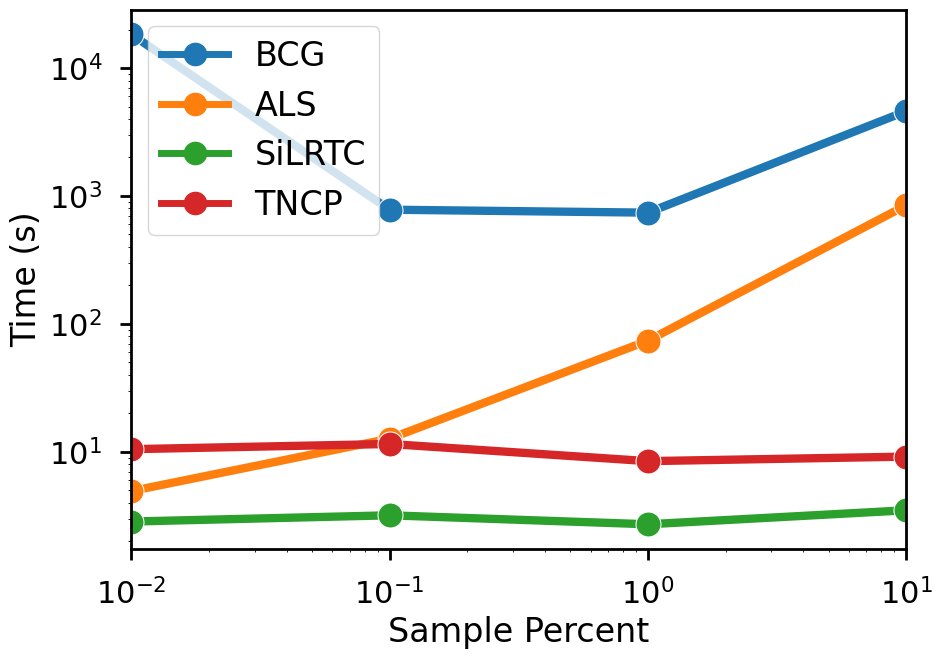}
  \end{subfigure}
  \caption{NMSE and computation time (in s) for tensors with size $10^{\times 7}$ and increasing  $n$ samples.}
  \label{fig:4}

  \label{fig:full2}
\end{figure*}
\section{Numerical Experiments}\label{sec6}
In this section, we perform numerical experiments to demonstrate the efficacy and scalability of our tensor completion algorithm. The experiments were performed on computer server running a Linux operating system, with 16GB of RAM and an Intel Xeon Processor E5-2650L v3
(30M Cache, 1.80 GHz) that has 12 cores and became available in the year 2014. The algorithm was implemented in Python 3, and Gurobi v9.1 \cite{gurobi} was used as an integer optimization solver for the separation oracle  (\ref{eqn:intprog}). 

We also performed experiments on benchmark algorithms in tensor completion, including the often-called `workhorse' for numerical tensor problems, alternating least squares (ALS) \cite{kolda2009}, and two state-of-the-art algorithms implemented in the PyTen package \cite{song2019tensor}, known as the simple low-rank tensor completion (SiLRTC) algorithm \cite{liu2012tensor} and the trace norm regularized \textsc{cp} decomposition (TNCP) algorithm \cite{liu2014trace}. PyTen is available at \url{https://github.com/datamllab/pyten} under a GPL 2 license.

The true tensor $\psi$ is constructed in each experiment by taking a random convex combination of a random set of ten points from $\mathcal{S}_1$. This setup ensures that $\|\psi\|_\pm\leq 1$ and $\rank(\psi) \leq 10$, which are provided as ground truth values during the experiments. Each experiment was performed with 100 repetitions. We recorded the normalized mean squared error (NMSE) to quantify the accuracy of tensor completion by each algorithm. NMSE is given by $\|\widehat{\psi}-\psi\|_F^{\ 2}/\|\psi\|_F^{\ 2}$, which is a stricter measure than the error metric used in Corollary \ref{cor:mtrs} because the statistical guarantee is not normalized. 

Aiming to minimize the influence of hyper-parameter selection in the experiment results, we use the ground truth values during the numerical experiments when possible. In particular, constructing the true tensor $\psi$ following the procedure as described above is useful for providing the ground truth values of $\lambda$ in our algorithm and the value of $k$ in ALS and TNCP. We also note that ALS tends to perform better under L2 regularization \cite{navasca2008swamp}, and thus we selected the L2 regularization hyperparameter of ALS such that it was favorable to the accuracy of ALS.

\subsection{Increasing Tensor Dimension}
\label{sec:tot}
The first set of results in Figure \ref{fig:1} is on tensors of order $p=3$ with dimensions increasing from $r=10$ to $r=100$. In each experiment, we observe $n=1000$ samples, allowing for repetition in indices. Figure \ref{fig:1} shows that our approach yields greater accuracy in a lower size of $r$ while all algorithms do not perform at a satisfactory level (NMSE below 1) as $r$ increases close to 100. Although our algorithm takes more computation time, it converges on the order of seconds to minutes for all dimensions. We note that TNCP and SiLRTC's NMSE values converge to $1$ with increasing $r$ as a naive solution, whose entries are equal to the average of all $y\langle i \rangle$, will lead to an NMSE of 1.

\subsection{Increasing Tensor Order}
The second set of results in Figure \ref{fig:2} is on tensors of increasing order $p$ with dimension $r_{i}=10$ for $i= 1, \dots, p$. We observed $n=10,000$ samples, again with indices sampled at random with replacement. This set of results shows that our algorithm achieves consistently higher accuracy as compared to the other methods while requiring more computation times. Yet, even for tensors with $10^7$ entries, our algorithm is still able to converge within computation times on the order of minutes.

\subsection{Increasing Sample Size}
Our last set of results in Figures \ref{fig:3} and \ref{fig:4} is on tensors of sizes $10^{\times 6}$ and $10^{\times 7}$. In each experiment, the sample size is increased by one order of magnitude according to the values given in Figures \ref{fig:3} and \ref{fig:4} as the percentage of total entries, starting from $0.01\%$ (using random sampling with replacement). The results demonstrate that our algorithm achieves considerably higher accuracy while requiring greater computation time. Nevertheless, our algorithm is able to converge within minutes for most cases, except for the case where the sample percent is $10^{-1}\%$ for a tensor of size $10^7$ (i.e., approximately five hours).

\section{Conclusion} \label{sec7}
We define a new tensor norm using the gauge of a specific polytope to develop an algorithm for (general) tensor completion. The algorithm successfully resolves the tension between practical computation and the information-theoretic rate: our approach provably converges globally in a linear number of oracle calls to integer linear optimization while satisfying the information-theoretic sample complexity rate. Numerical experiments further demonstrated the efficacy and scalability of the algorithm. Next steps include efforts to further accelerate the algorithm, such that it can attain its high performance within computation times similar to benchmark algorithms.


\section{Statements and Declarations}
The authors have no conflicts of interest to declare that are relevant to the content of this article. This material is based upon work supported by the National Science Foundation under Grant DGE-2125913 and Grant CMMI-1847666.
\bibliography{ref}
\bibliographystyle{splncs04}


\end{document}